\newtheorem{theorem}{Theorem}[section]
\newtheorem{lemma}{Lemma}[section]
\newtheorem{proposition}{Proposition}[section]
\newtheorem{definition}{Definition}[section]
\def\ci{\begin{color}{red}\,}
\def\cf{\end{color}\,}
\begin{document}
\begin{center}
{\Large\bf THE WEDDERBURN B-DECOMPOSITION FOR ALTERNATIVE BARIC ALGEBRAS}

\vspace{.2in}
{\bf Bruno L. M. Ferreira\\
and\\
Ruth Nascimento}
\vspace{.2in}

Technological Federal University of Paran\'{a}, Professora Laura Pacheco Bastos Avenue, 800, 85053-510, Guarapuava, Brazil

\vspace{.2in}

brunoferreira@utfpr.edu.br\\
 and\\
ruthnascimento@utfpr.edu.br
\vspace{.2in}

\end{center}

{\bf keywords:} Alternative algebras, Baric algebras, \\
Wedderburn $b$-decomposition. 

{\bf AMS Subject Classification:} 17D05, 17D92, 17A65.

\begin{abstract}
In this paper we deal with the Wedderburn $b$-decomposition for alternative baric algebras.
\end{abstract}

\section{Baric algebras}
Baric algebras play a central role in the theory of
genetic algebras. They were introduced by I. M. H. Etherington, in
\cite{Etherington}, in order to give an algebraic treatment to Genetic Populations.
Several classes of baric algebras have been defined,
such as: train, Bernstein, special triangular, etc.

In this paper $F$ is a field of characteristic $\neq 2,3,5$. Let $U$
be an algebra over $F$ not necessarily associative, commutative or
finite dimensional. If $\omega :U\longrightarrow F$ is a nonzero
homomorphism of algebras, then the ordered pair $(U,\omega)$ will
be called a \textit{baric algebra} or {\it b-algebra} over $F$ and $\omega $ its
\textit{weight function} or simply its \textit{weight.} For $x\in
U,$ $\omega (x)$ is called \textit{weight} of $x$.

When $B$ is a subalgebra of $U$ and $B\not\subset ker \omega$,
then $B$ is called a \textit{b-subalgebra} of $(U,\omega).$ In
this case, $(B,\omega_{B})$ is a b-algebra, where
$\omega_{B}=\omega |_{B}:B\longrightarrow F.$  The subset
$bar(B)=\{x\in B~|~ \omega (x)=0\}$ is a two-side ideal of $B$ of
codimension 1, called \textit{bar ideal} of $B.$ For all $b\in B$
with $\omega (b)\neq 0,$ we have $B=Fb\oplus bar(B)$. If $bar(B)$ is a two-side ideal of $bar(U)$
(then by [2, Proposition 1.1], it is also a two-sided
ideal of $U$), then $B$ is called \textit{normal b-subalgebra} of
$(U,\omega)$. If $I\subseteq bar(B)$ is a two-side ideal of $B,$
then $I$ is called \textit{b-ideal} of $B.$

Let $(U,\omega)$ be a b-algebra. A subset $B$ is called
\textit{maximal (normal) b-subalgebra} of $U$ if $B$ is a (normal)
b-subalgebra of $U$ and there is no (normal) b-subalgebra $C$ of
$U$ such that $B\subset C\subset U.$ A subset $I$ is called
\textit{maximal b-ideal} of $U$ if $I$ is a b-ideal of $U,$ $I\neq
bar(U)$ and there is no b-ideal $J$ of $U$ such that $I\subset
J\subset bar(U).$

A nonzero element $e\in U$ is called an {\it idempotent}
if $e^2=e$ and {\it nontrivial idempotent} if it is an idempotent different
from multiplicative identity element. If $(U,\omega)$ is a b-algebra and
$e\in U$ is an idempotent, then $\omega(e)=0$ or  $\omega(e)=1$. When $\omega(e)=1$,
then $e$ is called {\it idempotent of weight 1}.

A b-algebra $(U,\omega)$ is called \textit{b-simple} if for
all normal b-subalgebra $B$ of $U,$ $bar(B)=(0)$ or
$bar(B)=bar(U).$ When $(U,\omega)$ has an idempotent of weight 1,
then $(U,\omega)$ is b-simple if, and only if, its only b-ideals
are $(0)$ and $bar(U).$

Let $(U,\omega)$ be a b-algebra. We define the
\textit{bar-radical} or {\it b-radical} of $U,$ denoted by $rad(U),$ as:
$rad(U)=(0),$ if $(U,\omega)$ is b-simple, otherwise as
$rad(U)=\bigcap bar(B),$ where $B$ runs over the maximal normal
b-subalgebra of $U.$ Of course, $rad(U)$ is a b-ideal of $U.$

We say that $U$ is \textit{b-semisimple} if $rad(U)=(0).$

\section{Alternative algebras}
In this section, we present some definitions and properties of alternative algebras and prove some results which will be used later.
\vspace{.2in}

An algebra $U$ over a field $F$ is called {\it alternative algebra } if it satisfies the identities:
\begin{eqnarray}
(x,x,y) = (y,x,x) = 0,
\end{eqnarray}
for all $x,y \in U$, where the $(x,y,z)=(xy)z-x(yz)$ is the {\it associator of} the elements $x,y,z$.

Let $U$ be an alternative algebra over $F$. Then, $U$ is a power-associative algebra and if $U$ has an idempotent $e$, then $U$ is the vector space direct sum $U=U_{11}\oplus U_{10}\oplus U_{01}\oplus U_{00}$, where \\
\centerline{$U_{ij}=\{x_{ij}\in U~|~ex_{ij}=ix_{ij}~\textrm{and}~ x_{ij}e=jx_{ij}\}$ $(i,j=0,1)$}\\ satisfying the multiplicative relations $U_{ij}U_{jl}\subset U_{il}$, $U_{ij}U_{ij} \subset U_{ji}$ and $U_{ij}U_{kl} = 0$, \ $j \neq k$, \ \ $(i,j,l=0,1)$, see  \cite{Sch}.

A set of idempotents $\{e_{1}, \ldots , e_{t}\}$, in an (arbitrary) alternative algebra, is called {\it pairwise orthogonal} in case $e_{i}e_{j}=0$ for $i\neq j$. Note that any sum $e = e_{1}+ \cdots +e_{t}$, of pairwise orthogonal idempotents $(t\geq 1)$, is an idempotent. Also, $ee_{i}=e_{i}e=e_{i}$, $(i = 1, \ldots,t)$.

A more refined Peirce decomposition for an alternative algebra than the one given above is the following decomposition relative to a set $\{e_{1}, \ldots , e_{t}\}$, of pairwise orthogonal idempotents in $U$: $U$ is the vector space direct sum\\
\centerline{$U=\bigoplus_{i,j} U_{ij}~(i,j = 0,1, \ldots,t)$,}\\
where $U_{ij}=\{x_{ij}\in U~|~e_{k}x_{ij}=\delta_{ki} x_{ij}~\textrm{and}~ x_{ij}e_{k}=\delta_{jk}x_{ij}~\textrm{for}~(k = 1,\ldots, t)\}$ $(i,j = 0,1, \ldots,t)$, satisfying the multiplicative relations:
\begin{eqnarray}\label{subspace}
&&U_{ij}U_{jl}\subset U_{il}~(i,j,l=0,1, \ldots,t),\\
&&U_{ij}U_{ij}\subset U_{ji}~(i,j=0,1, \ldots,t),\\
&&U_{ij}U_{kl} = 0 ~ \ \ j \neq k, \ \  (i,j)\neq(k,l) \ \ \ (i,j,k,l=0,1, \ldots,t),
\end{eqnarray}
where $\delta _{jk}~(j,k= 0,1, \ldots,t)$ is the {\it Kronecker delta}.

An nonzero ideal $I$ of an alternative algebra $U$ is called {\it minimal} if for any ideal of $U$ such that $(0)\subset J\subset I$, then $J=(0)$ or $J=I$.

Let $U$ be a finite dimensional alternative algebra over $F$, since $U$ is a power-associative algebra, then by \cite{Sch} $U$ has a unique maximal nilideal, we define {\it nilradical} $R(U)$ of $U$ as the maximal nil ideal of $U$. Let us say that $U$ is {\it simple} when its only ideals are the trivial ideals and $U$ is not a zero algebra. If $R(U)=0$, then $U$ is called {\it semisimple}.

\begin{lemma}\label{l0} Let $U$ be a finite dimensional alternative algebra over $F$ with a non trivial idempotent $e$. If $U=\bigoplus_{i,j}U_{ij}~(i,j=0,1),$ relative to $e$, then
\begin{eqnarray*} R(U_{ii})=R(U)\cap U_{ii}~(i=0,1).
\end{eqnarray*}
\end{lemma}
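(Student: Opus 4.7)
The plan is to establish two inclusions separately, one straightforward and one requiring care with the non-associativity.

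For the inclusion $R(U)\cap U_{ii}\subseteq R(U_{ii})$, I note that $R(U)\cap U_{ii}$ consists of nilpotent elements (as $R(U)$ is a nil ideal of $U$) and is closed under $U_{ii}$-multiplication on both sides, because $U_{ii}$ is a subalgebra of $U$ and $R(U)$ is a two-sided ideal of $U$. Thus $R(U)\cap U_{ii}$ is itself a nil ideal of $U_{ii}$, and the maximality of $R(U_{ii})$ yields the inclusion.

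For the reverse inclusion $R(U_{ii})\subseteq R(U)\cap U_{ii}$, set $N:=R(U_{ii})\subseteq U_{ii}$ and $\bar i:=1-i$. The plan is to exhibit a nil two-sided ideal $J$ of $U$ containing $N$; this forces $N\subseteq R(U)$ and, combined with $N\subseteq U_{ii}$, closes the argument. Because $N$ is an ideal of $U_{ii}$ and because the Peirce orthogonality $U_{ij}U_{kl}=0$ for $j\neq k,\ (i,j)\neq(k,l)$ kills every remaining product of $N$ with Peirce components except for $U_{\bar i i}N\subseteq U_{\bar i i}$ and $NU_{i\bar i}\subseteq U_{i\bar i}$, a direct computation shows
$$U\cdot N \;\subseteq\; N + U_{\bar i i}N \qquad\text{and}\qquad N\cdot U \;\subseteq\; N + NU_{i\bar i}.$$
Iterating these multiplications and re-bracketing triple products via the alternative (Moufang) identities, I expect to verify that
$$J \;:=\; N + U_{\bar i i}N + NU_{i\bar i} + U_{\bar i i}\!\left(NU_{i\bar i}\right)$$
is a two-sided ideal of $U$ containing $N$.

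To finish I must show that $J$ is nil. An arbitrary $x\in J$ decomposes into its four Peirce components along $U=\bigoplus_{i,j}U_{ij}$, and because $U$ is power-associative the iterated powers $x^k$ are unambiguously defined. The shifts $U_{i\bar i}U_{i\bar i}\subseteq U_{\bar i i}$, $U_{\bar i i}U_{\bar i i}\subseteq U_{i\bar i}$, and $U_{\bar i i}U_{i\bar i}\subseteq U_{\bar i \bar i}$ cause the off-diagonal parts of $x^k$ to cycle through the Peirce components and eventually return to $U_{ii}$, and the calculation should further show that this returned contribution actually lies inside $N$, because an $N$-factor is carried along at each multiplication. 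Nilness of $N$ (indeed nilpotency, since $U_{ii}$ is finite dimensional alternative) then forces $x^m=0$ for all sufficiently large $m$. The main obstacle lies precisely in the non-associativity: identifications such as $U_{\bar i i}(NU_{i\bar i})$ versus $(U_{\bar i i}N)U_{i\bar i}$ are not automatic and must be secured through the alternative/Moufang identities, and the Peirce-tracking in the nilness step likewise requires careful bookkeeping of how factors regroup under non-associative multiplication.
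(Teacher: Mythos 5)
Your overall strategy is the right one: it is essentially the argument behind \cite[Corollary~3.8]{Sch}, which is all the paper itself offers as a proof (the paper gives no argument, only the citation). Your first inclusion $R(U)\cap U_{ii}\subseteq R(U_{ii})$ is complete and correct: it is a nil ideal of $U_{ii}$, hence lies in the maximal nil ideal. The difficulty is that in the reverse inclusion the two claims that carry the entire weight of the lemma are exactly the ones you defer with ``I expect to verify'' and ``the calculation should further show'': (a) that $J=N+U_{\bar{i}i}N+NU_{i\bar{i}}+U_{\bar{i}i}\bigl(NU_{i\bar{i}}\bigr)$ is a two-sided ideal of $U$, and (b) that $J$ is nil. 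Neither is a routine bookkeeping exercise. For (a) you must handle products such as $U_{i\bar{i}}\bigl(U_{\bar{i}i}N\bigr)$ and $\bigl(NU_{i\bar{i}}\bigr)U_{\bar{i}i}$, which return to $U_{ii}$ and must be shown to land back in $N$ only after re-associating with the linearized alternative laws, and you must reconcile $\bigl(U_{\bar{i}i}N\bigr)U_{i\bar{i}}$ with $U_{\bar{i}i}\bigl(NU_{i\bar{i}}\bigr)$ modulo the other summands; until that computation is written out, $J$ being an ideal is an assertion, not a proof.

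For (b) your element-wise plan is the shakier part. A general $x\in J$ has components in all four Peirce spaces (note $U_{\bar{i}i}\bigl(NU_{i\bar{i}}\bigr)\subseteq U_{\bar{i}\bar{i}}$), so the powers $x^k$ mix every component with every other and there is no clean ``cycle back to $U_{ii}$'' to track; power-associativity guarantees $x^k$ is well defined but does not by itself organize the Peirce components of $x^k$. The standard way to close this is structural rather than element-wise: in a finite-dimensional alternative algebra the nil radical is nilpotent, so $N=R(U_{ii})$ satisfies $N^m=0$ for some $m$, and one shows that $J$ is a \emph{nilpotent} ideal of $U$ by checking that every sufficiently long product of elements of $J$ accumulates enough factors from $N$ sitting in the $U_{ii}$ position. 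With that, $J\subseteq R(U)$ and hence $N\subseteq R(U)\cap U_{ii}$. So: right skeleton, correct easy half, but the ideal-verification and the nilness argument --- the actual content of the lemma --- are still missing.
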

\begin{proof} See \cite[Corollary 3.8]{Sch} .\end{proof}

\begin{proposition}\label{p21} Let $U$ be a finite dimensional alternative algebra. If $I$ is a mi\-nimal ideal of $U$, then either $I^{2}=0$ or $I$ is simple.
\end{proposition}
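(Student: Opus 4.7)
The approach is to assume $I^2 \neq 0$ and derive that $I$ has no proper nonzero algebra-ideal. As a first reduction, I would show $I^2 = I$. The linearized alternative identity $u(ab) = (ua)b + (au)b - a(ub)$ (which follows from $(u,a,b) = -(a,u,b)$) shows $U \cdot I^2 \subseteq I \cdot I = I^2$, and the symmetric identity on the right gives $I^2 \cdot U \subseteq I^2$. Thus $I^2$ is a nonzero $U$-ideal of $U$ contained in $I$, and minimality forces $I^2 = I$.

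Next I would show that the one-sided annihilators $L = \{x \in I : Ix = 0\}$ and $L' = \{x \in I : xI = 0\}$ are $U$-ideals of $U$, and hence both vanish. The crucial tool is the total antisymmetry of the associator in alternative algebras under $\mathrm{char}(F) \neq 2$: the identities $(x,x,y) = (y,x,x) = 0$ linearize to yield $(x,y,z) = -(y,x,z) = -(x,z,y)$, so $(\cdot,\cdot,\cdot)$ is totally antisymmetric (equivalently, cyclic permutations of its arguments preserve it up to sign). For $x \in L$, $u \in U$, $i \in I$, one has $(i,u,x) = -(u,i,x) = -(ui)x + u(ix) = 0$ since $(ui)x \in Ix = 0$ and $ix = 0$; combined with $(i,u,x) = (iu)x - i(ux) = -i(ux)$ (because $(iu)x \in I \cdot L = 0$), this gives $i(ux) = 0$, hence $ux \in L$. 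A parallel cyclic rotation of the associator shows $xu \in L$. Thus $L$ is a $U$-ideal and, being contained in the minimal ideal $I$ with $I^2 \neq 0$, must equal $0$. By symmetry $L' = 0$ as well.

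Given a nonzero algebra-ideal $J$ of $I$, my goal is to exhibit a nonzero $U$-ideal of $U$ contained in $J$. Since $L = 0$, no nonzero element of $I$ left-annihilates $I$, so $IJ \neq 0$; since $L' = 0$, $(IJ)I \neq 0$. I would let $V$ denote the $F$-linear span of all triple products $(i_1 j) i_2$ and $i_1 (j i_2)$ with $i_1, i_2 \in I$ and $j \in J$. Then $V \subseteq J$, because $(i_1 j) i_2 \in (IJ)I \subseteq JI \subseteq J$ and $i_1 (j i_2) \in I(JI) \subseteq IJ \subseteq J$, and $V \neq 0$ by the preceding nonvanishing. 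The key claim is that $V$ is a two-sided $U$-ideal of $U$: applying $u(ab) = (ua)b + (au)b - a(ub)$ and its right-handed analogue twice, and using $I = I^2$ to rewrite any solitary $I$-factor produced by expansion as a product in $I \cdot I$, one expresses $u \cdot v$ and $v \cdot u$ (for $v \in V$, $u \in U$) as sums of triple products of the required form, hence in $V$. Minimality then gives $V = I$, and combined with $V \subseteq J$, this yields $J = I$, establishing simplicity of $I$.

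The main obstacle is the last verification that $V$ is $U$-invariant; it requires careful bookkeeping with the alternative identities, crucially using $I = I^2$ to convert the isolated elements of $I$ that naive expansion produces back into products on which the alternative laws have purchase. This is the nonassociative counterpart of the classical associative argument, where associativity permits the same rearrangements effortlessly.
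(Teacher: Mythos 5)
The paper does not actually prove this proposition; its ``proof'' is a bare citation of \cite[Chap.~VIII, Theorem~10]{ZHEV}, so you are attempting something the authors deliberately outsourced. Your first two reductions are correct and cleanly executed: the linearized left and right alternative laws do show that $I^{2}$ is an ideal of $U$, so minimality forces $I^{2}=I$; and the alternating property of the associator does show that $L=\{x\in I\,:\,Ix=0\}$ and $L'=\{x\in I\,:\,xI=0\}$ are $U$-ideals properly contained in $I$, hence zero. The gap is in the final step, and it is not a matter of bookkeeping: it is the entire mathematical content of the theorem. You assert that $V$, the span of the products $(i_{1}j)i_{2}$ and $i_{1}(ji_{2})$, is a $U$-ideal, but the expansion you propose does not close up. Writing $u((i_{1}j)i_{2})=(u(i_{1}j))i_{2}+((i_{1}j)u)i_{2}-(i_{1}j)(ui_{2})$ and then expanding $(i_{1}j)u=i_{1}(ju)+i_{1}(uj)-(i_{1}u)j$ produces the terms $(i_{1}(uj))i_{2}$ and $(i_{1}(ju))i_{2}$, in which the middle factor $uj$ (resp.\ $ju$) lies in $I$ but has escaped $J$, because $J$ is only an ideal of $I$, not of $U$. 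These terms are not visibly in $J$, let alone in $V$, and the device of rewriting $I$ as $I\cdot I$ does not touch them: the defect is a corrupted $J$-factor, not a solitary $I$-factor.

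What your last step implicitly requires is the known, genuinely nontrivial fact that if $J$ is an ideal of an ideal $I$ of an alternative algebra $U$, then a suitable power of the $U$-ideal $\widehat{J}$ generated by $J$ lies back inside $J$; this is a Slater-type lemma proved in \cite{ZHEV} via the Moufang identities, and it is precisely what drives the proposition (by minimality $\widehat{J}=I$, and $I^{2}=I$ then forces $I\subseteq J$). If that lemma were obtainable by straightforward expansion of the linearized alternative laws, it would not occupy the place it does in the structure theory. As written, the $U$-invariance of your $V$ is unestablished, and the sketch you give for it breaks down at the first nontrivial term; to complete the argument you must either prove the ideal-of-an-ideal lemma or choose a different candidate for the nonzero $U$-ideal inside $J$.
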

\begin{proof}\cite[Chap. VIII, Theorem 10]{ZHEV}.\end{proof}

\section{Baric alternative algebra}

In this section, we introduce a notion of Wedderburn b-decomposition of a $b$-alternative algebra and we present conditions for which it has such decomposition.

If $(U,\omega)$ is a b-algebra and $I$ is a b-ideal of $U$, then $\left(U/I,\bar{\omega}\right)$ is a b-algebra, where $\bar{\omega}(u+I)=\omega(u)$.

\begin{definition} Let $(U,\omega)$ be $b$-alternative algebra over a field $F$. We say that $U$ has a Wedderburn $b$-decomposition if we can decompose $U$ as a direct sum $U=S\oplus V\oplus rad(U)$ (vector space direct sum), where $S$ is a b-semisimple b-subalgebra of $U$ and $V$ is a vector subspace of $bar(U)$ such that $V^2\subset rad(U)$.
\end{definition}

\begin{lemma}\label{l1} Let $U$ be a finite dimensional b-alternative algebra over $F$ with unity element $1$ and $I$ a $b$-ideal of $U$ such that $I\subset R(U)$. If $\overline{u_{1}}$ is a nonzero idempotent of $bar\big(U/I\big)$, then there is an idempotent $e_{1}$ in $bar(U)$ such that $\overline{e_{1}}=\overline{u_{1}}.$ Moreover, if $bar(U)$ is an algebra with a unity $f$ and $\overline{f}=\overline{u_{1}}$, then $f=e_{1}$.
\end{lemma}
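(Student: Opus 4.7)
The plan is to prove the statement by a standard lift of idempotents modulo a nil ideal, using power-associativity to reduce the whole computation to an associative commutative subalgebra, followed by a short direct argument for the ``moreover'' part.

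First I would pick any preimage $u_{1}\in U$ of $\overline{u_{1}}$. Since $\overline{u_{1}}\in bar(U/I)$, we have $\omega(u_{1})=\bar{\omega}(\overline{u_{1}})=0$, so in fact $u_{1}\in bar(U)$. Set $z=u_{1}^{2}-u_{1}\in I$; because $I\subset R(U)$, $z$ is nilpotent. As $U$ is power-associative, the subalgebra $F[u_{1}]$ is commutative and associative, so the entire construction below is legitimately performed inside $F[u_{1}]$. Define iteratively $v_{0}=u_{1}$, $v_{k+1}=3v_{k}^{2}-2v_{k}^{3}$, and let $w_{k}=v_{k}^{2}-v_{k}$. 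A direct polynomial calculation (valid since $\mathrm{char}(F)\neq 2,3$) gives
\[
w_{k+1}=-(3-2v_{k})(2v_{k}+1)\,w_{k}^{2},
\]
so $w_{k}$ lies in the ideal of $F[u_{1}]$ generated by $z^{2^{k}}$; in particular $w_{n}=0$ for $n$ large enough. Setting $e_{1}:=v_{n}$ then yields an idempotent with $e_{1}-u_{1}\in I$. Since $u_{1}\in bar(U)$ and $I\subset bar(U)$, it follows that $e_{1}\in bar(U)$ and $\overline{e_{1}}=\overline{u_{1}}$.

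For the ``moreover'' part, suppose $bar(U)$ has a unity $f$ with $\overline{f}=\overline{u_{1}}$, and put $d:=f-e_{1}\in I$. Using that $f$ is a two-sided unity for $bar(U)$ and $e_{1}\in bar(U)$, we get $fe_{1}=e_{1}f=e_{1}$, $f^{2}=f$, $e_{1}^{2}=e_{1}$, and hence
\[
d^{2}=f^{2}-fe_{1}-e_{1}f+e_{1}^{2}=f-e_{1}-e_{1}+e_{1}=d.
\]
Thus $d$ is an idempotent contained in the nilideal $R(U)$, forcing $d=0$ and $f=e_{1}$.

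The main obstacle is the legitimacy of the Newton-type iteration in a non-associative setting, and this is entirely defused by power-associativity, which confines the computation to the associative commutative subalgebra $F[u_{1}]$. A subsidiary check is that each $v_{k}$ stays in $bar(U)$, which is automatic from $v_{k}\equiv u_{1}\pmod{I}$, $u_{1}\in bar(U)$, and $I\subset bar(U)$.
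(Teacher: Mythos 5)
Your proof is correct. It follows the same overall strategy as the paper --- everything happens inside the associative, commutative subalgebra generated by a representative $u_{1}$ of $\overline{u_{1}}$, which is legitimate by power-associativity, and your treatment of the ``moreover'' part (showing $d=f-e_{1}$ is an idempotent lying in the nil ideal $R(U)$, hence zero) is literally the paper's argument. The one genuine difference is how the idempotent in $F[u_{1}]$ is produced. The paper invokes the general fact that a finite-dimensional non-nil power-associative algebra contains an idempotent, writes it as $e_{1}=\sum_{i}\alpha_{i}u_{1}^{i}$, and then needs a normalization step: $\overline{e_{1}}=\alpha\overline{u_{1}}$ with $\alpha=\sum_{i}\alpha_{i}$, and $\alpha=1$ is forced because $e_{1}\notin R(U)$ implies $\overline{e_{1}}\neq\overline{0}$. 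Your Newton-type iteration $v_{k+1}=3v_{k}^{2}-2v_{k}^{3}$ is fully explicit, needs only the nilpotence of $z=u_{1}^{2}-u_{1}$ rather than finite-dimensionality, and delivers $e_{1}\equiv u_{1}\pmod{I}$ directly, so the normalization step disappears (I checked the identity $w_{k+1}=-(3-2v_{k})(2v_{k}+1)w_{k}^{2}$; it holds, and the recursion $v_{k+1}-v_{k}=-(2v_{k}-1)w_{k}$ keeps each $v_{k}$ congruent to $u_{1}$ modulo $I$ as you say). The only point you leave implicit is that $e_{1}\neq 0$, which is immediate since $\overline{e_{1}}=\overline{u_{1}}\neq\overline{0}$; it would be worth one sentence, since the paper's convention is that idempotents are nonzero.
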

\begin{proof} Let us consider the quotient algebra $U/I=\{\overline{x}~|~x\in U\}$ and the application $\overline{\omega}:U/I\rightarrow F$ defined by $\overline{\omega}(\overline{x})=\omega (x)$, for all $x\in U$. Then $\overline{\omega}$ is a nonzero algebra homomorphism and therefore $(U/I,\overline{\omega})$ is a b-algebra such that $U/I=F\overline{1}\oplus bar\big(U/I)$, where $bar\big(U/I)=bar(U)/I.$

Next, since $\overline{u_{1}}$ is an idempotent of $bar\big(U/I\big)$, then any representative $u_{1}$ of $\overline{u_{1}}$ is non nilpotent and belongs to $bar(U)$. It follows that the subalgebra generated by the element $u_{1}$ is a non nil subalgebra of $bar(U)$. This implies that $bar(U)$ has an idempotent $e_{1}=\sum_{i} \alpha _{i}u_{1}^{i}$, $\alpha _{i}\in F,$ verifying $\overline{e_{1}}=\alpha \overline{u_{1}},$ $\alpha \in F,$ $\alpha =\sum_{i} \alpha _{i}.$ Since $e_{1}\notin
R(U)$, then $e_{1}\notin I$ and it follows that $\overline{e_{1}}\neq \overline{0}$ and $\overline{e_{1}}=\alpha \overline{e_{1}}$. Hence $\alpha =1$ and $\overline{e_{1}}=\overline{u_{1}}.$ Moreover, if $bar(U)$ is an algebra with a multiplicative unity $f$ and $\overline{f}=\overline{u_{1}}$, then $\overline{f}=\overline{e_{1}}$ which implies $f-e_{1}\in I$. Since $(f-e_{1})^{2}=f-e_{1}$, then $f=e_{1}$.\end{proof}

\begin{lemma}\label{l32} Let $U$ be a finite dimensional b- alternative algebra over $F$ with unity element $1$ and $J$ a $b$-ideal of $U$ such that $J\subset R(U)$. If $\{\overline{u_{1}}, \ldots ,\overline{u_{t}}\}$ is a set of nonzero pairwise orthogonal idempotents of $bar\big(U/J\big)$, then there are a set of nonzero pairwise orthogonal idempotents $\{e_{1}, \ldots ,e_{t}\}$ of $bar(U)$ veri\-fying $\overline{e_{i}}=\overline{u_{i}}~(i=1, \ldots ,t).$ Moreover, if $e$ is any idempotent of $bar(U)$ such that $\overline{e}=\sum_{i=1}^{t} \overline{u_{i}}$, we may choose by $e_{i}$ such that $e=\sum_{i=1}^{t} e_{i}.$
\end{lemma}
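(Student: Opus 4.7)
I argue by induction on $t$, handling both assertions uniformly. Since $\sum_i\overline{u_i}$ is a nonzero idempotent of $bar(U/J)$, Lemma \ref{l1} already produces some idempotent $e\in bar(U)$ with $\overline{e}=\sum_i\overline{u_i}$, so the first part of the statement is subsumed by the moreover part. It therefore suffices to prove, by induction on $t$, that given any idempotent $e\in bar(U)$ with $\overline{e}=\sum_i\overline{u_i}$ there exist pairwise orthogonal idempotents $e_i\in bar(U)$ satisfying $\overline{e_i}=\overline{u_i}$ and $e_1+\cdots+e_t=e$. The case $t=1$ is immediate by taking $e_1=e$.

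\textbf{Inductive step.} For $t\geq 2$, I consider the Peirce decomposition $U=U_{11}\oplus U_{10}\oplus U_{01}\oplus U_{00}$ relative to $e$. Pairwise orthogonality combined with $\overline{e}=\sum_j\overline{u_j}$ forces $\overline{u_i}\,\overline{e}=\overline{e}\,\overline{u_i}=\overline{u_i}$, so each $\overline{u_i}$ lies in the $(1,1)$-Peirce component of $U/J$ relative to $\overline{e}$; by uniqueness of the Peirce decomposition this component coincides with $U_{11}/(J\cap U_{11})$. Moreover $U_{11}\subseteq bar(U)$, since $\omega(x)=\omega(ex)=\omega(e)\omega(x)=0$ for every $x\in U_{11}$. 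The subalgebra $U_{11}$ is finite-dimensional alternative with unit $e$, and by Lemma \ref{l0}, $J\cap U_{11}\subseteq R(U)\cap U_{11}=R(U_{11})$ is a nil ideal of $U_{11}$. Consequently the alternative-algebra argument underlying Lemma \ref{l1} applies inside $U_{11}$: a representative $u_1\in U_{11}$ of $\overline{u_1}$ is non-nilpotent, the subalgebra it generates is non-nil and finite-dimensional, so by power-associativity it contains an idempotent $e_1=\sum_k\alpha_k u_1^k$, and the same coefficient-sum calculation as in Lemma \ref{l1} forces $\overline{e_1}=\overline{u_1}$. Setting $e'=e-e_1$, the fact that $e$ is the unit of $U_{11}$ gives $e'^2=e'$ and $e_1e'=e'e_1=0$; clearly $e'\in bar(U)$ and $\overline{e'}=\sum_{i=2}^t\overline{u_i}$. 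The inductive hypothesis applied to $e'$ and $\{\overline{u_2},\ldots,\overline{u_t}\}$ yields pairwise orthogonal idempotents $e_2,\ldots,e_t\in bar(U)$ with $\overline{e_i}=\overline{u_i}$ and $e_2+\cdots+e_t=e'$.

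\textbf{Orthogonality and main obstacle.} It remains to verify that $e_1$ is orthogonal to every $e_i$ with $i\geq 2$. By pairwise orthogonality of $e_2,\ldots,e_t$ and $e_2+\cdots+e_t=e'$, each such $e_i$ satisfies $e'e_i=e_ie'=e_i$ and so lies in the $(1,1)$-Peirce component of $U$ relative to $e'$; on the other hand $e_1$ lies in the $(0,0)$-component since $e'e_1=e_1e'=0$. The Peirce multiplication relations (with indices $(i,j)\neq(k,l)$ and $j\neq k$) then yield $e_1e_i=e_ie_1=0$, and finally $\sum_{i=1}^t e_i=e_1+e'=e$. The main obstacle is precisely the lifting step inside $U_{11}$: the algebra $U_{11}$ is not a b-algebra in the paper's sense, because its weight vanishes identically, so Lemma \ref{l1} is not available there as a black box and one must re-execute its underlying alternative-algebra lifting argument, supported by Lemma \ref{l0} to ensure $J\cap U_{11}$ is nil.
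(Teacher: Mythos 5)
Your proof is correct, but it is organized quite differently from the paper's. You collapse the two assertions into one: since $\sum_i\overline{u_i}$ is a nonzero idempotent, Lemma \ref{l1} supplies some idempotent $e$ lying over it, and you then run a single induction that peels idempotents off $e$ --- lift $\overline{u_1}$ to $e_1$ inside the corner $U_{11}$ relative to $e$, pass to $e'=e-e_1$, and recurse --- with orthogonality of $e_1$ against the remaining $e_i$ read off from the Peirce relations relative to $e'$. The paper instead runs its induction on the first assertion only, adjoining the new idempotent $e_{t+1}$ in the $(0,0)$-corner relative to the already-constructed sum $\sum_{i=1}^{t}e_i$, and then proves the ``moreover'' clause in a separate second pass: it lifts all the $\overline{u_i}$ inside the $(1,1)$-corner relative to the given $e$ and observes that $e-\sum_{i=1}^{t}e_i$ is an idempotent lying in $J\subset R(U)$, hence zero. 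A further difference of technique: where you note that the corner $U_{11}$ has identically zero weight, so Lemma \ref{l1} cannot be cited there as a black box, and therefore re-execute its lifting argument by hand (choosing the representative $u_1$ in $U_{11}$ and forming the polynomial idempotent there), the paper wraps each corner in the b-subalgebra $F1\oplus bar(U)_{ii}$ built with the global unity $1$, precisely so that Lemma \ref{l1} applies verbatim; your workaround is legitimate, and the paper's device would have been equally available to you. Your organization buys a shorter, unified argument in which the ``moreover'' clause is the primary statement and the first clause is a corollary; the paper's buys strict reuse of Lemma \ref{l1} and of the already-proved first clause when handling the second.
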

\begin{proof} To prove this lemma we use the principle of mathematical induction. For $t=1$, the result is true, by Lemma \ref{l1}. Now, suppose that for a positive integer $t\geq 1$, the lemma is true. Then for the set of nonzero pairwise orthogonal idempotents $\{\overline{u_{1}}, \ldots ,\overline{u_{t+1}}\}$ of $bar\big(U/J\big),$ there is a set of nonzero pairwise orthogonal idempotents $\{e_{1}, \ldots , e_{t}\}$, of $bar(U)$, verifying $\overline{e_{i}}=\overline{u_{i}}~(i=1, \ldots ,t),$ by the principle of mathematical induction. Let us consider the Peirce decompositions:
$$bar(U)=\bigoplus_{i,j}bar(U)_{ij} \hbox{ and } bar\big(U/J\big)=\bigoplus_{i,j}bar\big(U/J\big)_{ij}~(i,j=0,1),$$
relative to idempotents $e=\sum_{i=1}^{t} e_{i}$ and  $\overline{e}=\sum_{i=1}^{t} \overline{e_{i}},$ respectively. It follows that: 
\begin{enumerate}
\item[(i)] $e_{i}\in bar(U)_{11}$ $(i=1, \ldots , t)$; 
\item[(ii)] $\overline{e_{i}}\in bar\big(U/J\big)_{11}$ $(i=1, \ldots , t)$; 
\item[(iii)] $\overline{u_{t+1}}\in bar\big(U/J\big)_{00}$.
\end{enumerate}

Let us define $P=F1\oplus bar(U)_{00}$. Then $(P,\omega_{P})$ is a finite dimensional b-subalgebra of $(U,\omega)$ with unity element $1$ and $bar(P)=bar(U)_{00}$, where $\omega _{P}:=\omega |_{P}$. Let us define $K=J\cap bar(U)_{00}$. It is easy to check that $K$ is a $b$-ideal of $P$ such that $K\subset R(P)$, because $R(P)=R\big(bar(P)\big)=R\big(bar(U)_{00}\big)$, by \cite[Proposition 4.1]{Couto} and the fact that $R\big(bar(U)_{00}\big)=R\big(bar(U)\big)\cap bar(U)_{00}=R(U)\cap bar(U)_{00}$, according to Lemma \ref{l0}, where $R\big(bar(U)\big)=R(U)$, again by \cite[Proposition 4.1]{Couto}.

Let us consider the quotient algebra $P/K=\{\widetilde{x}~|~x\in P\}$ and $\widetilde{\omega}:P/K\rightarrow F$, defined by $\widetilde{\omega_{P}}(\widetilde{x}):=\omega_{P}(x)$ for all $x\in P.$ Certainly, $(P/K, \widetilde{\omega_{P}})$ is a b-algebra such that $bar(P/K)=bar(P)/K=bar(U)_{00}/\big(J\cap bar(U)_{00}\big).$

Also observe that
\begin{eqnarray}\label{iso}
bar\big(U/J\big)_{00}=\big(bar(U)_{00}+J\big)/J\cong bar(U)_{00}/\big(J\cap bar(U)_{00}\big).
\end{eqnarray}

Since $\overline{u_{t+1}}\in bar\big(U/J\big)_{00}$, we can assume that $u_{t+1}\in bar(U)_{00}.$ In fact, let us write $u_{t+1}=a_{11}+a_{10}+a_{01}+a_{00}$, where $a_{ij}\in bar(U)_{ij}$ $(i,j=0,1).$ Then $\overline{u_{t+1}}=\overline{a_{11}}+\overline{a_{10}}+\overline{a_{01}}+\overline{a_{00}}$ which implies  $\overline{0}=\overline{e}\,\overline{u_{t+1}}=\overline{a_{11}}+\overline{a_{10}}$ and  $\overline{0}=\overline{u_{t+1}}\,\overline{e}=\overline{a_{11}}+\overline{a_{01}}$. Thus, $\overline{a_{11}}=\overline{a_{10}}=\overline{a_{01}}=\overline{0}$ implying $\overline{u_{t+1}}=\overline{a_{00}}$.

From the isomorphism, in (\ref{iso}), and the assumption on the element idempotent $\overline{u_{t+1}}$, we have $\widetilde{u_{t+1}}\in bar(P/K)$ which implies that there is an idempotent $e_{t+1}\in bar(P)$ such that $\widetilde{e_{t+1}}=\widetilde{u_{t+1}},$  by Lemma \ref{l1}. From the isomorphism, in (\ref{iso}), we conclude that $e_{t+1}\in bar(U)$ and $\overline{e_{t+1}}=\overline{u_{t+1}}.$ Since $e_{t+1}\in bar(U)_{00},$ then the elements idempotent $e_{1},
\ldots , e_{t},e_{t+1}$ are pairwise orthogonal.

Finally, suppose that $e$ is an arbitrary idempotent of $bar(U)$ such that $\overline{e}=\sum_{i=1}^{t} \overline{u_{i}}$. Let us consider the Peirce decompositions $bar(U)=\bigoplus_{i,j}bar(U)_{ij}$ and $bar\big(U/J\big)=\bigoplus_{i,j}bar\big(U/J\big)_{ij}~(i,j=0,1)$, relative to idempotents $e$ and $\overline{e}$, respectively. It follows that $\overline{u_{i}}\in bar\big(U/J\big)_{11}$ $(i=1, \ldots , t)$.

Let us define the vector subspace of $Q=F1\oplus bar(U)_{11}$ of $U$. Naturally, $Q$ is an subalgebra of $U$ such that $Q\not\subset \ker(\omega)$. It follows that $(Q,\omega_{Q})$, where $\omega _{Q}:=\omega |_{Q}$, is a finite dimensional $b$-subalgebra of $U$ with unity element $1$ and $bar(Q)=bar(U)_{11}$. Let us define $L=J\cap bar(U)_{11}$. As in the previous definitions, certainly $L$ is a $b$-ideal of $Q$ such that $L\subset R(Q)$, because $R(Q)=R\big(bar(Q)\big)=R\big(bar(U)_{11}\big)$, by \cite[Proposition 4.1]{Couto} and the fact that $R\big(bar(U)_{11}\big)=R\big(bar(U)\big)\cap bar(U)_{11}=R(U)\cap bar(U)_{11}$, according to Lemma \ref{l0}, where $R\big(bar(U)\big)=R(U)$, again by \cite[Proposition 4.1]{Couto}.

Let us take the quotient algebra $Q/L=\{\widetilde{x}~|~x\in Q\}$ and $\widetilde{\omega}:Q/L\rightarrow F$, defined by
$\widetilde{\omega_{Q}}(\widetilde{x}):=\omega_{Q}(x)$ for all $x\in Q.$ Again, we have that $(Q/L, \widetilde{\omega_{Q}})$ is a b-algebra such that $bar(Q/L)=bar(Q)/L=bar(U)_{11}/\big(J\cap bar(U)_{11}\big).$

Now, let us observe that
\begin{eqnarray}\label{iso2}
bar\big(U/J\big)_{11}=\big(bar(U)_{11}+J\big)/J\cong bar(U)_{11}/\big(J\cap bar(U)_{11}\big).
\end{eqnarray}

Since $\overline{u_{i}}\in bar\big(U/J\big)_{11}$ and $\overline{u_{i}}=\overline{e}\,\overline{u_{i}}\,\overline{e}$ $(i=1, \ldots , t)$, we can take a representative $u_{i}$ of $\overline{u_{i}}$ in $bar(U)_{11}$ $(i=1, \ldots , t)$.

From the isomorphism, in(\ref{iso2}), and the assumption on the element idempotent $\overline{u_{i}}$, we have that $\widetilde{u_{i}}\in bar(Q/L)$ which implies that there is a set of idempotents $e_{1}, \ldots ,e_{t}$, in $bar(Q)$, pairwise orthogonal, such that $\widetilde{e_{i}}=\widetilde{u_{i}}$ $(i=1, \ldots , t)$.

As the idempotent $e$ is a multiplicative unity in the subalgebra $bar(U)_{11}$ and $\overline{e}=\sum_{i=1}^{t} \overline{e_{i}},$ then $e-\sum_{i=1}^{t} e_{i}\in J\subset R(U)$ and $\big(e-\sum_{i=1}^{t} e_{i}\big)^{2}=e-\sum_{i=1}^{t} e_{i}$ which implies $e=\sum_{i=1}^{t} e_{i}.$\end{proof}


\begin{lemma}\label{l33} Let $(U,\omega)$ be a finite dimensional b-algebra of $(\gamma, \delta)$ type with unity element $1$ and $J$ a $b$-ideal of $U$ such that $J\subset R(U)$. If $bar(U/J)$ contains a total matrix algebra $\mathfrak{M}_{t}$ of degree $t$ with identity element $\overline{u}$ and $f$ is an idempotent of $bar(U)$ such that  $\overline{f}=\overline{u}$, then $bar(U)$ contains a total matrix algebra $\mathfrak{M}$ of degree $t$ with identity element $f$ such that $\overline{\mathfrak{M}}=\mathfrak{M}_{t}.$
\end{lemma}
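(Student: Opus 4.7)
The plan is to proceed by reducing the problem to two tasks: lifting the diagonal idempotents of $\mathfrak{M}_t$, and then lifting the off-diagonal matrix units inside the Peirce components determined by those idempotents.

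First I would apply Lemma \ref{l32} to the set of pairwise orthogonal idempotents $\{\overline{u}_{11},\ldots,\overline{u}_{tt}\}$ of $bar(U/J)$ (the diagonal matrix units of $\mathfrak{M}_t$), whose sum equals $\overline{u}=\overline{f}$. This gives pairwise orthogonal idempotents $e_{11},\ldots,e_{tt}\in bar(U)$ with $\overline{e_{ii}}=\overline{u}_{ii}$ and $f=\sum_{i=1}^{t}e_{ii}$. I would then form the Peirce decomposition $U=\bigoplus_{i,j=0}^{t}U_{ij}$ relative to $\{e_{11},\ldots,e_{tt}\}$, and note that the quotient map $U\to U/J$ sends $U_{ij}$ onto the corresponding Peirce component of $U/J$ taken with respect to $\{\overline{e_{11}},\ldots,\overline{e_{tt}}\}$. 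Since each $\overline{u}_{ij}$ is characterized by $\overline{e_{ii}}\,\overline{u}_{ij}=\overline{u}_{ij}=\overline{u}_{ij}\,\overline{e_{jj}}$, the Peirce projection of any preimage of $\overline{u}_{ij}$ lies in $U_{ij}$ and still reduces to $\overline{u}_{ij}$.

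Next, for each $j=2,\ldots,t$ I would pick representatives $a_j\in U_{1j}$ of $\overline{u}_{1j}$ and $b_j\in U_{j1}$ of $\overline{u}_{j1}$. By the Peirce rules (\ref{subspace}), $a_jb_j\in U_{11}$ and reduces to $\overline{u}_{11}=\overline{e_{11}}$, so $a_jb_j=e_{11}+r_j$ with $r_j\in J\cap U_{11}$; similarly $b_ja_j=e_{jj}+s_j$ with $s_j\in J\cap U_{jj}$. Because $J\subset R(U)$, the elements $r_j,s_j$ are nilpotent and $e_{11}+r_j$, $e_{jj}+s_j$ are invertible in the local alternative algebras $U_{11}$, $U_{jj}$ via the geometric series $(e_{11}+r_j)^{-1}=\sum_{k\ge 0}(-r_j)^{k}$ (well defined by power-associativity). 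I would then normalize by replacing $a_j$ with $e_{1j}:=(e_{11}+r_j)^{-1}a_j$ so that $e_{1j}b_j=e_{11}$, and subsequently adjust $b_j$ (or compute the analogous product in $U_{jj}$) so that $b_ja_j=e_{jj}$ is likewise achieved; set $e_{j1}:=b_j$ after this adjustment. Finally I would define all remaining matrix units by $e_{ij}:=e_{i1}e_{1j}$ for $i,j\ge 2$, together with $e_{ii}$ already built.

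The last step is verification that $\{e_{ij}\}_{i,j=1}^{t}$ satisfies $e_{ij}e_{kl}=\delta_{jk}e_{il}$ and spans a total matrix algebra $\mathfrak{M}$ of degree $t$ with identity $f$ whose image $\overline{\mathfrak{M}}$ equals $\mathfrak{M}_t$. The multiplicative relations $U_{ij}U_{jl}\subset U_{il}$ and $U_{ij}U_{kl}=0$ for $j\ne k$, $(i,j)\ne(k,l)$ immediately kill most products; the non-trivial identities reduce, modulo $J$, to the known matrix-unit identities in $\mathfrak{M}_t$, and any discrepancy lies in $J\cap U_{il}$. The main obstacle is the normalization step: in a merely alternative setting the manipulation $(e_{11}+r_j)^{-1}(a_jb_j)=((e_{11}+r_j)^{-1}a_j)b_j$ requires the vanishing of certain associators $(U_{11},U_{1j},U_{j1})$. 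Fortunately, by Artin's theorem the subalgebra generated by the two elements $a_j,b_j$ is associative (and contains $a_jb_j$, hence $r_j$ and thus $e_{11}$), so all the computations involving $e_{11},r_j,a_j,b_j$ can be carried out as in the associative Wedderburn--Malcev argument; the same reasoning will then close the verification that $\mathfrak{M}$ is a lift of $\mathfrak{M}_t$.
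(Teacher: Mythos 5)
Your proposal is correct and follows essentially the same route as the paper's proof: lift the diagonal idempotents via Lemma \ref{l32}, pass to the Peirce decomposition relative to $\{e_{11},\ldots,e_{tt}\}$, choose representatives of $\overline{u}_{1j}$ and $\overline{u}_{j1}$ in the corresponding Peirce components, correct $u_{1j}$ by the geometric-series inverse of $e_{11}+r_j$ (the paper's $f_{11}+b_j$), and set $e_{ij}=e_{i1}e_{1j}$. The only difference is presentational: you invoke Artin's theorem to justify the reassociations, where the paper carries out the associator manipulations explicitly.
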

\begin{proof} Let $\mathfrak{M}_{t}$ be a total matrix algebra $\mathfrak{M}_{t}$ of degree $t$ with identity element $\overline{u}$. By hypothesis we have $\mathfrak{M}_{t}=\{\overline{u_{ij}}~|~i,j=1, \ldots ,t\}$, with the familiar multiplication table $\overline{u_{ij}}\ \overline{u_{kl}}=\delta _{jk}\overline{u_{il}}$ $(i,j=1, \ldots ,t)$.

By Lemma \ref{l32}, there exist pairwise orthogonal idempotents $f_{11}, \ldots ,f_{tt}$, in $bar(U)$, such that $\overline{f_{ii}}=\overline{u_{ii}}~(i=1, \ldots ,t)$ and $f=\sum_{i=1}^{t} f_{ii}.$

Let us consider the Peirce decompositions $bar(U)=\bigoplus_{i,j} bar(U)_{ij}$ and $bar\big(U/J\big)=\bigoplus_{i,j} bar\big(U/J\big)_{ij}~(i,j=0,1, \ldots ,t)$, relative to the sets of idempotents  $\{f_{11}, \ldots ,f_{tt}\}$ and $\{\overline{f_{11}}, \ldots ,\overline{f_{tt}}\}$, respectively. It follows that: {\it (i)} $f_{ii}\in bar(U)_{ii}$ $(i=1, \ldots , t)$; and {\it (ii)} $\overline{f_{ii}}\in bar\big(U/J\big)_{ii}$ $(i=1, \ldots , t)$.

Now, let us observe that for every index $i=2, \ldots ,t$, we can take the representative $u_{i1}$, of $\overline{u_{i1}}\in \mathfrak{M}_{t}$, in $bar(U)_{i1}.$ For a $i=1$, let us take $u_{11}=f_{11}.$ Similarly, for every index $j=2, \ldots ,t$, we can take the representative $u_{1j}$, of $\overline{u_{1j}}\in \mathfrak{M}$, in $bar(U)_{1j}.$

Yet, since $\overline{u_{1j}}\,\overline{u_{j1}}=\overline{f_{11}}$ $(j=1, \ldots , t)$, then $u_{1j}u_{j1}=f_{11}+a_{j},$ where $a_{j}\in J\cap bar(U)_{11}$ is a nilpotent element. Let us consider $m$ the smallest positive integer such that $a_{j}^{m}=0$ and let us define $b_{j}=\sum _{i=1}^{m-1} (-a_{j})^{i}.$ Then: {\it (i)} $b_{j}\in J\cap bar(U)_{11}$; {\it (ii)} $b_{j}a_{j}=-\sum _{i=2}^{m-1} (-a_{j})^{i}$; and {\it (iii)} $a_{j}+b_{j}+b_{j}a_{j}=0$. It follows that $(f_{11}+b_{j})(f_{11}+a_{j})=f_{11}+a_{j}+b_{j}+b_{j}a_{j}=f_{11}.$

Let us define $f_{i1} = u_{i1}$ and $f_{1j} = (f_{11} + b_{j})u_{1j}$ $(i,j = 2,\ldots,t)$. Then $f_{1j}f_{j1} = \big((f_{11} + b_{j})u_{1j}\big)u_{j1} = (f_{11}u_{1j})u_{j1} + (b_{j}u_{1j})u_{j1} = f_{11}(u_{1j}u_{j1}) + b_{j}(u_{1j}u_{j1})= f_{11}(f_{11}+a_{j}) + b_{j}(f_{11}+a_{j}) = f_{11}$. 
Next, let us define $f_{ij} = f_{i1}f_{1j}$ $(i \neq j; i,j = 2,\ldots,t)$. From a direct calculus, we have $\overline{f_{ij}} = \overline{u_{ij}}$ and $f_{ij}f_{kl} =\delta _{jk} f_{il}$ $(i,j,k,l=1, \ldots , t)$. Thus, the set $\{f_{ij}~|~i,j = 1, \ldots, t\}$ is a basis for a total matrix algebra $\mathfrak{M}$ of degree $t$, in $bar(U)$, with identity element $f$ such that $\overline{\mathfrak{M}}=\mathfrak{M}_{t}$.\end{proof}

\begin{lemma}\label{l34} Let $(U,\omega)$ be a finite dimensional $b-$alternative algebra with unity element $1$ and $J$ a $b$-ideal of $U$ such that $J\subset R(U)$. If $bar(U/J)$ contains a direct sum of $b$-ideals $\mathfrak{M}_{t_{1}} \oplus \cdots \oplus \mathfrak{M}_{t_{s}}$, where each $\mathfrak{M}_{t_{i}}$ is a total matrix algebra of degree $t_{i}$ $(i=1, \ldots ,s)$, then $bar(U)$ contains a direct sum of pairwise orthogonal subalgebras $\mathfrak{M}_{1} \oplus \cdots \oplus \mathfrak{M}_{s}$, where each $\mathfrak{M}_{i}$ is a total matrix algebra of degree $t_{i}$ $(i=1, \ldots ,s)$, such that $\overline{\mathfrak{M}_{i}}=\mathfrak{M}_{t_{i}}$ and \begin{eqnarray*}
\mathfrak{M}_{1} \oplus \cdots \oplus \mathfrak{M}_{s}\cong \mathfrak{M}_{t_{1}} \oplus \cdots \oplus \mathfrak{M}_{t_{s}}.
\end{eqnarray*}
\end{lemma}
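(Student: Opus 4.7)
The plan is to lift the diagonal matrix units of every summand $\mathfrak{M}_{t_{i}}$ simultaneously using Lemma \ref{l32}, then apply the construction of Lemma \ref{l33} block by block, and finally deduce pairwise orthogonality from the Peirce decomposition of $bar(U)$ relative to the full set of lifted idempotents.

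First I would observe that since $\mathfrak{M}_{t_{1}} \oplus \cdots \oplus \mathfrak{M}_{t_{s}}$ is a direct sum of $b$-ideals of $bar(U/J)$, the identities $\overline{u^{(i)}}$ of the summands are pairwise orthogonal idempotents there. Expanding each $\overline{u^{(i)}} = \sum_{j=1}^{t_{i}} \overline{u^{(i)}_{jj}}$ in its diagonal matrix units, the full collection $\{\overline{u^{(i)}_{jj}} : 1 \le i \le s,\ 1 \le j \le t_{i}\}$ is a set of pairwise orthogonal idempotents of $bar(U/J)$. Using Lemma \ref{l1} to first produce an idempotent $f^{(i)}$ of $bar(U)$ with $\overline{f^{(i)}} = \overline{u^{(i)}}$ for each $i$, I would then apply Lemma \ref{l32} to the full collection and, via its moreover clause, arrange the pairwise orthogonal lifts $f^{(i)}_{jj} \in bar(U)$ so that $f^{(i)} = \sum_{j=1}^{t_{i}} f^{(i)}_{jj}$ for every $i$.

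Next I would run the construction in the proof of Lemma \ref{l33} once for each $i$, starting from the already fixed lifts $f^{(i)}_{11},\ldots,f^{(i)}_{t_{i}t_{i}}$ in place of the diagonal idempotents built internally there. This yields a total matrix algebra $\mathfrak{M}_{i} \subset bar(U)$ of degree $t_{i}$ with identity $f^{(i)}$ and $\overline{\mathfrak{M}_{i}} = \mathfrak{M}_{t_{i}}$. The crucial refinement is to pick each representative $u^{(i)}_{1k}$ of $\overline{u^{(i)}_{1k}}$ inside the Peirce component $bar(U)_{(i,1),(i,k)}$ of the finest decomposition relative to the global set $\{f^{(i')}_{jj}\}_{i',j}$, and similarly for the $u^{(i)}_{k1}$. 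This choice is available because $\overline{u^{(i)}_{1k}} = \overline{u^{(i)}_{11}}\,\overline{u^{(i)}_{1k}}\,\overline{u^{(i)}_{kk}}$ inside the associative total matrix algebra $\mathfrak{M}_{t_{i}}$, so the Peirce projection of any initial lift still maps to $\overline{u^{(i)}_{1k}}$ modulo $J$. Consequently every matrix unit $f^{(i)}_{jk}$ of $\mathfrak{M}_{i}$ lies in $bar(U)_{(i,j),(i,k)}$.

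With this set-up, the multiplicative Peirce relations starting from \eqref{subspace} force $\mathfrak{M}_{i}\,\mathfrak{M}_{i'}=0$ whenever $i \neq i'$, because the inner indices $(i,k)$ and $(i',j')$ disagree. Hence the $\mathfrak{M}_{i}$ are pairwise orthogonal subalgebras, the sum $\mathfrak{M}_{1} + \cdots + \mathfrak{M}_{s}$ is a vector-space direct sum (since the $f^{(i)}$ are themselves pairwise orthogonal), and the componentwise map onto $\mathfrak{M}_{t_{1}} \oplus \cdots \oplus \mathfrak{M}_{t_{s}}$ is an algebra isomorphism by $\overline{\mathfrak{M}_{i}} = \mathfrak{M}_{t_{i}}$ and pairwise orthogonality on both sides. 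The main obstacle will be justifying, in the per-block application of Lemma \ref{l33}, that the representatives $u^{(i)}_{1k}$ and $u^{(i)}_{k1}$ can indeed be refined to live in the finest $(i,\cdot)$ Peirce components rather than only in the coarser components used in the original statement of Lemma \ref{l33}; once that refinement is in place, the rest is a direct combination of Lemmas \ref{l32} and \ref{l33} with the Peirce multiplication rules.
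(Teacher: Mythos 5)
Your proposal is correct in its first two steps and matches the paper exactly there: lift the identities of the summands to pairwise orthogonal idempotents $e_{1},\ldots ,e_{s}$ of $bar(U)$ via Lemma \ref{l32}, then obtain each $\mathfrak{M}_{i}$ with identity $e_{i}$ and $\overline{\mathfrak{M}_{i}}=\mathfrak{M}_{t_{i}}$ via Lemma \ref{l33}. Where you diverge is the orthogonality step, and there your route is substantially heavier than it needs to be. You propose to reopen the proof of Lemma \ref{l33}, re-choose every off-diagonal representative $u^{(i)}_{1k}$ inside the finest Peirce component relative to the global family $\{f^{(i')}_{jj}\}$, and track that all matrix units land in the right fine components --- the very point you flag as the ``main obstacle.'' That refinement can be carried out (the images $\overline{u^{(i)}_{1k}}$ do sit in the correct fine components because the $\mathfrak{M}_{t_{i}}$ are ideals with pairwise zero products, and Peirce projection commutes with the quotient), but it forces you to re-prove a modified Lemma \ref{l33} rather than cite it. The paper avoids all of this: it uses Lemma \ref{l33} as a black box and only the \emph{coarse} Peirce decomposition $bar(U)=\bigoplus_{i,j}bar(U)_{ij}$ relative to $\{e_{1},\ldots ,e_{s}\}$. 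Since $e_{i}$ is the identity of $\mathfrak{M}_{i}$, every $x_{i}\in\mathfrak{M}_{i}$ satisfies $x_{i}=e_{i}x_{i}=x_{i}e_{i}$, and the vanishing of the associators $(e_{k},e_{i},x_{i})$ and $(x_{i},e_{i},e_{k})$ in an alternative algebra gives $e_{k}x_{i}=(e_{k}e_{i})x_{i}=\delta_{ki}x_{i}$ and $x_{i}e_{k}=\delta_{ik}x_{i}$; hence $\mathfrak{M}_{i}\subset bar(U)_{ii}$, and $bar(U)_{ii}\,bar(U)_{jj}=0$ for $i\neq j$ yields pairwise orthogonality and the direct-sum isomorphism in one line. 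So your argument buys nothing extra over the paper's, at the cost of exactly the technical burden you identified; I would replace the fine-Peirce refinement with the coarse-Peirce observation above.
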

\begin{proof} Let $\overline{e_{t_{i}}}$ be the unity element of $\mathfrak{M}_{t_{i}}$ $(i = 1, \ldots, s)$. By Lemma \ref{l32}, $bar(U)$ has a set of idempotents $e_{1}, \ldots ,e_{s}$, pairwise orthogonal, such that $\overline{e_{i}}=\overline{e_{t_{i}}}$ $(i=1, \ldots , s)$. This implies that $bar(U)$ contains a total matrix algebra $\mathfrak{M}_{i}$ of degree $t_{i}$ with identity element $e_{i}$ such that $\overline{\mathfrak{M}_{i}}=\mathfrak{M}_{t_{i}}$, by Lemma \ref{l33}.

Let us consider the Peirce decomposition $bar(U)=\bigoplus_{i,j} bar(U)_{ij}$ $(i,j=1, \ldots , s)$, relative to set of idempotents $\{e_{1}, \ldots , e_{s}\}$. For all element $x_{i}\in \mathfrak{M}_{i}$ $(i=1, \ldots , s)$, we have $x_{i}=e_{i}x_{i}$. But in an alternative algebra each associator $(x, e_j, e_l) = 0$ and $(e_j, e_l, x) = 0$ ($j, l = 1, \cdots , s $),  which implies $e_{k}x_{i}=e_{k}(e_{i}x_{i})
=(e_{k}e_{i})x_{i}=\delta _{ki} x_{i}$. Similarly, we show $x_{i}e_{k}=\delta _{ik}x_{i}$. Thus, $\mathfrak{M}_{i}\subset bar(U)_{ii}$ $(i=1, \ldots ,s)$. Since the subalgebras $bar(U)_{ii}$ $(i=1, \ldots , s)$ are pairwise orthogonal, then the sum $\mathfrak{M}_{1} \oplus \cdots \oplus \mathfrak{M}_{s}$, is a direct sum, pairwise orthogonal, such that
$\mathfrak{M}_{1} \oplus \cdots \oplus \mathfrak{M}_{s}\cong \mathfrak{M}_{t_{1}} \oplus \cdots \oplus \mathfrak{M}_{t_{s}}$.\end{proof}

\begin{lemma}\label{l35} Let $(U,\omega)$ be a finite dimensional $b-$alternative algebra with unity element $1$ and $J$ a $b$-ideal of $U$ such that $J\subset R(U)$. If $bar(U/J)$ contains a direct sum of $b$-ideals $J_{1} \oplus \cdots \oplus J_{r}$ such that $J^2_{i} = \overline{0}$ $(i= 1, \ldots, r)$, then $bar(U)$ contains a vector subspace $V$ such that $V\cong \overline{V}=J_{1} \oplus \cdots \oplus J_{r}$ and $V^2 \subset rad(U)$.
\end{lemma}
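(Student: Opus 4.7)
The plan is to build $V$ by lifting a basis of the direct sum $J_{1} \oplus \cdots \oplus J_{r}$ from $bar(U/J)$ back to $bar(U)$. For each $i \in \{1,\ldots,r\}$ I would fix an $F$-basis $\{\overline{v}_{i,1}, \ldots, \overline{v}_{i,n_i}\}$ of $J_i$, and for each index pair $(i,k)$ pick an arbitrary preimage $v_{i,k} \in bar(U)$ of $\overline{v}_{i,k}$ under the canonical projection $\pi : U \to U/J$. I then define $V$ to be the $F$-linear span of the collection $\{v_{i,k} : 1 \leq i \leq r,\ 1 \leq k \leq n_i\}$ inside $bar(U)$.

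To identify $V$ with $\overline{V}$, note that $\pi|_V : V \to \overline{V}$ is surjective onto $J_1 \oplus \cdots \oplus J_r$ by construction. For injectivity, if $\sum_{i,k} c_{i,k} v_{i,k} = 0$ in $U$, projection yields $\sum_{i,k} c_{i,k} \overline{v}_{i,k} = \overline{0}$; since the concatenation of the chosen bases of the $J_i$ is a basis of $J_{1}\oplus\cdots\oplus J_{r}$, all scalars vanish. Hence $\pi|_V$ is a vector space isomorphism, so $V \cong \overline{V} = J_{1} \oplus \cdots \oplus J_{r}$.

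For the inclusion $V^{2} \subset rad(U)$, let $v,w \in V$ be arbitrary and write $\overline{v} = \sum_i \overline{v}_i$, $\overline{w} = \sum_i \overline{w}_i$ with $\overline{v}_i,\overline{w}_i \in J_i$. Since the $J_i$ are ideals of $bar(U/J)$ in direct sum, $J_i J_k \subset J_i \cap J_k = \overline{0}$ whenever $i \neq k$, and $J_i^{2} = \overline{0}$ by hypothesis. Consequently $\overline{vw} = \overline{v}\,\overline{w} = \overline{0}$, which forces $vw \in J \subset R(U)$. As $V \subset bar(U)$, we also have $vw \in bar(U)$, so that $vw$ lies in $R(U)\cap bar(U)$.

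The final step I expect to be the real obstacle is to conclude $vw \in rad(U)$ from $vw \in R(U) \cap bar(U)$. I would invoke the fact that in a finite dimensional $b$-alternative algebra the $b$-radical $rad(U)$ coincides with the nilradical $R(U)$ (equivalently with $R(bar(U))$), an identification already implicit in the use of \cite[Proposition 4.1]{Couto} throughout Lemmas \ref{l32}--\ref{l34}. Once this is granted, $V^{2} \subset R(U) \cap bar(U) = rad(U)$, and $V$ has all the required properties.
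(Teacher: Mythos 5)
Your construction is the same as the paper's: lift a basis of $J_{1}\oplus\cdots\oplus J_{r}$ to representatives in $bar(U)$, let $V$ be their span, and verify linear independence and the behaviour of products. The lifting, the isomorphism $V\cong\overline{V}$, and the observation that $\overline{v}\,\overline{w}=\overline{0}$ (hence $vw\in J\subset R(U)$) are all correct, and are exactly what the paper compresses into ``from a direct calculus''.

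The flaw is in the step you yourself single out as the real obstacle. The identification $rad(U)=R(U)$ (equivalently $rad(U)=R(U)\cap bar(U)$) is false in general: take $U=F1\oplus N$ with $N\neq(0)$ finite dimensional and $N^{2}=0$; then every hyperplane $W\subset N$ gives a maximal normal b-subalgebra $F1\oplus W$ with $bar(F1\oplus W)=W$, so $rad(U)=\bigcap W=(0)$, while $R(U)=N$. What \cite[Proposition 4.1]{Couto} supplies is only $R(U)=R\big(bar(U)\big)$, a statement about nilradicals, not about the b-radical. The characterization this paper actually relies on (in the proof of the Main Theorem, citing \cite[Theorem 4.2]{Couto}) is $rad(U)=bar(U)^{2}\cap R(U)$. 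Your argument is rescued by noting that $vw$ is a product of two elements of $bar(U)$, hence lies in $bar(U)^{2}$ as well as in $J\subset R(U)$, so $vw\in bar(U)^{2}\cap R(U)=rad(U)$; with that substitution the conclusion $V^{2}\subset rad(U)$ stands, but as written your final equality is not a valid justification.
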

\begin{proof} Let $\big\{\overline{v_{i,1}},\overline{v_{i,2}}, \ldots, \overline{v_{i,r_{i}}}\big\}$ be a basis of vector subspace $J_{i}$ $(i = 1, \ldots, r)$ and $v_{i,j}$ a representative of the class $\overline{v_{i,j}}$ $(i = 1, \ldots, r;~j= 1, \ldots ,r_{i})$, in $bar(U)$. From a direct calculus, we have: {\it (i)} the set $\bigcup\limits_{i=1}^{r} \big\{v_{i,1},v_{i,2}, \ldots, v_{i,r_{i}}\big\}$ is linearly independent; and {\it (ii)} $v_{i,j}v_{k,l} \in rad(U)$ $(i,k = 1, \ldots, r)$ and $(j = 1, \ldots, r_{i};~l= 1, \ldots, r_{k})$.

Let us define $V$ the vector subspace generated by the set\\
\centerline{$\bigcup\limits_{i=1}^{r} \big\{v_{i,1},v_{i,2}, \ldots, v_{i,r_{i}}\big\}$.}\\
It follows that  $V\cong \overline{V}=J_{1} \oplus \cdots \oplus J_{r}$ and $V^2 \subset rad(U)$.\end{proof}

\begin{lemma}\label{lnovo} Let $(U,\omega)$ be a finite dimensional $b$-alternative algebra with unity element $1$ and $J$ a $b$-ideal of $U$ such that $J^2 = 0$. If $bar(U/J)$ contains a direct sum of $b$-ideals $I_{1} \oplus \cdots \oplus I_{r}$ such that $I_{i}$ is a split Cayley algebra $(i= 1, \ldots, r)$, then $bar(U)$ contains a subalgebra $\mathcal{C} \cong I_{1} \oplus \cdots \oplus I_{r}$.
\end{lemma}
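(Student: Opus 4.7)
The strategy is to parallel Lemmas \ref{l32}, \ref{l33}, \ref{l34}: lift each identity element first, place each Cayley piece into a diagonal Peirce component so the direct-sum structure is automatic, and then reduce to lifting a single split Cayley algebra through the square-zero ideal $J$. Concretely, let $\overline{u_i}$ denote the identity of $I_i$ ($i=1,\ldots,r$). By Lemma \ref{l32}, lift $\{\overline{u_1},\ldots,\overline{u_r}\}$ to pairwise orthogonal idempotents $e_1,\ldots,e_r$ in $bar(U)$. Exactly as in the proof of Lemma \ref{l34}, in the Peirce decomposition of $bar(U)$ relative to $\{e_1,\ldots,e_r\}$ each Cayley piece sits in the $(i,i)$-component, and once every $\mathcal{C}_i\subset bar(U)_{ii}$ is constructed with $\overline{\mathcal{C}_i}=I_i$, the pairwise orthogonality of the $e_j$'s forces $\mathcal{C}_1+\cdots+\mathcal{C}_r$ to be a direct sum of pairwise orthogonal subalgebras isomorphic to $I_1\oplus\cdots\oplus I_r$. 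So the problem reduces to the case $r=1$.

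For this single lifting, I use the classical split-Cayley basis. Choose a pair of primitive orthogonal idempotents $\overline{g_1},\overline{g_2}\in I_1$ with $\overline{g_1}+\overline{g_2}=\overline{u_1}$, together with elements $\overline{x_j},\overline{y_j}$ ($j=1,2,3$) in the $(1,2)$- and $(2,1)$-Peirce pieces of $I_1$, satisfying $\overline{x_j}\,\overline{y_k}=\delta_{jk}\overline{g_1}$, $\overline{y_j}\,\overline{x_k}=\delta_{jk}\overline{g_2}$, $\overline{x_j}\,\overline{x_k}=\epsilon_{jkl}\overline{y_l}$, $\overline{y_j}\,\overline{y_k}=\epsilon_{jkl}\overline{x_l}$. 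Apply Lemma \ref{l32} inside $Fe_1\oplus bar(U)_{e_1e_1}$ to lift $\overline{g_1},\overline{g_2}$ to orthogonal idempotents $g_1,g_2\in bar(U)$ with $g_1+g_2=e_1$, then refine the Peirce decomposition with respect to $\{g_1,g_2\}$ and choose preliminary representatives $x_j\in bar(U)_{12}$, $y_j\in bar(U)_{21}$, exactly as in Lemma \ref{l33} (projecting arbitrary lifts onto the correct Peirce component uses only that $\overline{x_j},\overline{y_j}$ already lie in the prescribed Peirce pieces of $bar(U/J)$).

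The core obstacle, which is where $J^2=0$ must be used, is correcting these lifts so that all sixteen Cayley relations hold exactly, not just modulo $J$. Each product $x_jy_k$, $y_jx_k$, $x_jx_k$, $y_jy_k$ equals the prescribed target plus an element of $J$ sitting in the correct Peirce piece, and any further adjustment $x_j\mapsto x_j+\alpha_j$, $y_j\mapsto y_j+\beta_j$ with $\alpha_j,\beta_j\in J$ perturbs the products only by $J$-linear terms of the form $g_1\alpha_j+\alpha_jg_2$, $\alpha_jy_k+x_j\beta_k$, etc., because $J^2=0$ kills all quadratic corrections. Finding the needed $\alpha_j,\beta_j$ is thus a linear problem: one must kill a system of $2$-cocycle-type obstructions living in the alternative $I_1$-bimodule $J\cap bar(U)_{e_1e_1}$. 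This can be done either abstractly, appealing to the separability of the split Cayley algebra (every alternative bimodule extension of a split Cayley algebra splits, cf.\ \cite{Sch,ZHEV}), or concretely by an elementary step-by-step correction in the spirit of the $f_{11}+b_j$ trick of Lemma \ref{l33}: first correct $x_1,y_1$ so that $x_1y_1=g_1$ and $y_1x_1=g_2$, then correct $x_2,y_2$, then $x_3,y_3$, using $J^2=0$ to see that each correction leaves previously established relations intact.

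Once the corrections are made, the set $\{g_1,g_2,x_j,y_j\}$ spans an $8$-dimensional subspace $\mathcal{C}_1\subset bar(U)$ closed under multiplication and isomorphic to $I_1$ by construction, with $\overline{\mathcal{C}_1}=I_1$. Repeating this for each $i$ and assembling via the orthogonality argument at the end of Lemma \ref{l34} produces the desired subalgebra $\mathcal{C}\cong I_1\oplus\cdots\oplus I_r\subset bar(U)$. The hypothesis $J^2=0$ is used exactly once, but decisively, to linearize the obstruction equations; the separability of the split Cayley algebra is what guarantees this linear system is solvable.
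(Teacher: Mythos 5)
Your proposal is essentially correct in strategy but takes a genuinely different route from the paper. The paper does not use the Zorn-type basis $\{g_1,g_2,x_j,y_j\}$ at all: it presents each split Cayley algebra via the Cayley--Dickson doubling $I_k = F_2 + \overline{w_k}F_2$ (citing \cite[Lemma 3.16]{Sch}), lifts the $2\times 2$ matrix part wholesale by Lemma \ref{l34}, and then reduces everything to producing a single element $v_k\notin\mathcal{D}$ with $v_k^2=1$, $\overline{v_k}=\overline{w_k}$ and $xv_k=v_k\iota(x)$. That element is built by two completely explicit corrections (first $h_{ij}=f_{ij}-e_{ij}c_j$ to kill the products with $e_{ji}$, then $p_{12}=(e_{11}-a_1)h_{12}$ to force $p_{ij}p_{ji}=e_{ii}$, using $J^2=0$ only to get $a_i^2=0$), followed by a direct verification of the bimodule identity $bv_k=v_k\iota(b)$ on the matric basis. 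Your abstract route --- invoking vanishing of the second-cohomology-type obstruction for the separable split Cayley algebra, i.e.\ the classical Wedderburn principal theorem for alternative algebras from \cite{Sch,ZHEV} --- is valid and shorter, at the cost of importing a substantially bigger theorem than the paper wants to lean on here; it also has the virtue of making the reduction to $r=1$ and the orthogonality of the lifted pieces explicit, which the paper's proof leaves implicit. However, your ``concrete'' alternative is not actually a proof as written: with the Zorn basis there are on the order of $36$ coupled relations ($x_jx_k=\epsilon_{jkl}y_l$ ties the $x$'s to the $y$'s), and the assertion that correcting $x_1,y_1$, then $x_2,y_2$, then $x_3,y_3$ ``leaves previously established relations intact'' is precisely the nontrivial combinatorial content that would have to be checked; the paper avoids this entirely by reducing to the single generator $w_k$, for which only a handful of identities need verification. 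If you intend the elementary version to stand on its own, you should either carry out those corrections explicitly or switch to the doubling presentation as the paper does.
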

\begin{proof}
We may take $I_k = F_2 + \overline{w_k}F_2$, $\overline{w_k}^2 = \overline{1}$ by \cite[Lemma 3.16]{Sch} where $F_2$ is the algebra of all $2\times 2$ matrices over $F$, $k= 1, \cdots , r$. By Lemma \ref{l34}, $bar(U)$ contains a total matrix algebra $\mathcal{D} \cong F_2$ such that $\mathcal{D}$ contains an identity element and the matric basis $\left\{e_{ij}\right\}$ of $\mathcal{D}$ yields the matric basis $\left\{\overline{e_{ij}}\right\}$ of $F_2$. 
Let $\iota: \mathcal{D}\longrightarrow \mathcal{D}$ the involution in $\mathcal{D}$. We know $x + \iota(x) = t(x)1$ for all $x \in \mathcal{D}$, where $t(x)$ is the trace of $x$ and $1$ is the identity element of $\mathcal{D}$ by \cite[page 45]{Sch}. Note that $\iota(\overline{x}) = \overline{\iota(x)}$, $a(\overline{w_k} b) = \overline{w_k}(\iota(a)b)$, $(\overline{w_k} a)b = \overline{w_k}(ba)$ and $(\overline{w_k} a)(\overline{w_k} b) = b\iota(a)$ for $x, a, b \in \mathcal{D}$, we have indicated to the reader \cite[Chapter III, Sec. 4]{Sch} for Cayley Algebras.
In order to prove the lemma, it is sufficient to show the existence of $v_k \notin \mathcal{D}$ satisfying $v^2_k = 1$, $\overline{v_k} = \overline{w_k}$ and $xv_k = v_k \iota(x)$ for all $x \in \mathcal{D}$.   

Consider $\overline{f_{ij}} = \overline{w_k} \  \overline{e_{jj}}$ for $i \neq j $ $(i, j = 1,2).$

Using the Peirce decomposition of $bar(U)$ relative to $e_1 = e_{11}$, $e_2 = e_{22}$, we may take $f_{ij} \in bar(U)_{ij}$ $(i \neq j)$. In fact $\overline{e_{ii}} (\overline{f_{ij}}\overline{e_{jj}}) = \overline{e_{ii}} (\overline{w_k} \ \overline{e_{jj}}^2) =\overline{w_k} \ \overline{\iota(e_{ii})e_{jj}} = \overline{w_k} \ \overline{e_{jj}}  = \overline{f_{ij}}$.
Now $\overline{e_{ji}} \ \overline{f_{ij}} = \overline{e_{ji}} \ (\overline{w_k} \ \overline{e_{ij}}) = - \overline{w_k}(\overline{e_{ji}} \ \overline{e_{jj}}) = 0$, implying that
$$e_{ji}f_{ij} = c_j, \ \ \ \ c_j \in J\cap bar(U)_{jj} \ \ \ \ \ (i \neq j; i,j = 1,2).$$

Write $h_{ij} = f_{ij} - e_{ij}c_j$. Then $h_{ij} \in bar(U)_{ij}$, $\overline{h_{ij}} = \overline{f_{ij}}$, and 
$$e_{ji}h_{ij} = h_{ij}e_{ji} = 0  \ \ \ \ \ \ (i \neq j; i,j = 1, 2).$$
In fact by Lemma \ref{l33} we know $e_{ji}e_{ij} = e_{jj}$, so $e_{ji}h_{ij} = c_j - e_{ji}(e_{ij}c_j) = c_{j} - (e_{ji}e_{ij})c_j = 0$. Also $e_{ij}c_j = e_{ij}(e_{ji}f_{ij}) = (e_{ij}e_{ji})f_{ij} - (e_{ij}, e_{ji}, f_{ij}) = f_{ij} + (f_{ij}, e_{ji}, e_{ij}) = f_{ij} + (f_{ij}e_{ji})e_{ij} - f_{ij} = (f_{ij}e_{ji})e_{ij}$, so that 
$$h_{ij}e_{ji} = f_{ij}e_{ji} - [(f_{ij}e_{ji})e_{ij}]e_{ji} = 0.$$
Now $\overline{h_{ij}} \ \overline{h_{ji}} = \overline{f_{ij}} \ \overline{f_{ji}} = \overline{e_{ii}} \ \overline{\iota(e_{jj})} = \overline{e_{ii}}$ implies that 
$$h_{ij}h_{ji} = e_{ii} + a_{i}, \ \ \ \ \ a_{i} \in J \cap bar(U)_{ii} \ \ \ \ \ \ \ (i \neq j; i,j = 1,2).$$
Then $a_{i}^2 = 0$ since $J^2 = 0$, and 
$$(e_{ii} - a_{i})(e_{ii} + a_i) = e_{ii} = (e_{ii} + a_{i})(e_{ii} - a_i) \ \ \ \ \ \ (i = 1,2).$$

Write $p_{12} = (e_{11} - a_1)h_{12}$, $p_{21} = h_{21}$. Then $p_{ij} \in bar(U)_{ij}$, $\overline{p_{ij}} = \overline{f_{ij}}$, and we shall prove
$$p_{ij}p_{ji} = e_{ii} \ \ \ \ \ \ (i \neq j; i,j = 1,2).$$
In fact,  
$$p_{12}p_{21} = [(e_{11} - a_1)h_{12}]h_{21} = (e_{11} - a_{1})(h_{12}h_{21}) = (e_{11} - a_{1})(e_{11} + a_{1}) = e_{11}.$$
But
$$a_{i}h_{ij} = (h_{ij}h_{ji} - e_{ii})h_{ij} = h_{ij}(h_{ji}h_{ij}) - h_{ij} = h_{ij}(e_{jj} + a_{j}) - h_{ij} = h_{ij}a_j,$$
so that $p_{12} = h_{12} - a_1h_{12} = h_{12} - h_{12}a_2 = h_{12}(e_{22} - a_{2})$ and
$p_{21}p_{12} = h_{21}[h_{12}(e_{22} - a_2)] = (h_{21}h_{12})(e_{22} - a_2) = e_{22}.$
Also we have $e_{ij}p_{ji} = p_{ji}e_{ij} = 0$ \ \ \ $(i \neq j; i, j = 1, 2).$
Thus write $v_k = p_{12} + p_{21}$. Then $\overline{v_k} = \overline{f_{12}} + \overline{f_{21}} = \overline{w_k}$, implying $v_k \notin \mathcal{D}$. Also 
$$v_{k}^2 = (p_{12} + p_{21})^2 = e_{11}+e_{22} = 1.$$
Writing $b= \alpha_{1} e_{11} + \alpha_{2} e_{12} + \alpha_{3} e_{21} + \alpha_{4} e_{22}$, we have $\iota(b) = \alpha_{4} e_{11} - \alpha_{2} e_{12} - \alpha_{3} e_{21} + \alpha_{1} e_{22}$,
\begin{eqnarray*}
bv_{k} &=& \alpha_{1} p_{12} + \alpha_{2} e_{12}p_{12} + \alpha_{3} e_{21}p_{21} + \alpha_{4} p_{21}
\\&=& \alpha_{1} p_{12} - \alpha_{2} p_{12}e_{12} - \alpha_{3} p_{21}e_{21} + \alpha_{4} p_{21} = v_k \iota(b)
\end{eqnarray*}
which completes the proof of the lemma.
\end{proof}

\begin{theorem}\label{t31} Let $F$ be an algebraically closed field and $(U,\omega)$ be a finite dimensional b- alternative algebra over $F$ with unity element $1$. Then $U$ has a Wedderburn $b$-decomposition $U=S\oplus V\oplus rad(U)$.
Furthermore, $bar(S)$ is a semisimple algebra and $V\oplus rad(U)$ is a nil ideal of $bar(U)$.
\end{theorem}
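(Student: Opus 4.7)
My plan is to lift the classical Wedderburn decomposition of the semisimple quotient $bar(U)/rad(U)$ back to $bar(U)$, using the four lifting lemmas developed above. Concretely, I would take $J := rad(U)$. Being a b-ideal of a finite-dimensional b-algebra with unity, $rad(U)$ is nil (cf.\ \cite{Couto}), so $J \subseteq R(U)$ and the hypotheses of Lemmas \ref{l32}--\ref{l34} are met. Next I would observe that $bar(U)/rad(U) \cong bar(U/rad(U))$ is semisimple as a finite-dimensional alternative algebra: any nonzero nil ideal of $bar(U/rad(U))$ would correspond to a nil b-ideal of the b-semisimple algebra $U/rad(U)$, contradicting $rad(U/rad(U)) = 0$.

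Since $F$ is algebraically closed, the classical Wedderburn structure theorem for semisimple alternative algebras (see \cite{Sch}, \cite{ZHEV}) yields
\[
bar(U)/rad(U) \;=\; \mathfrak{M}_{t_1}\oplus\cdots\oplus\mathfrak{M}_{t_s}\oplus\mathcal{C}_1\oplus\cdots\oplus\mathcal{C}_r,
\]
where each $\mathfrak{M}_{t_i}$ is a total matrix algebra and each $\mathcal{C}_j$ is a split Cayley algebra. I would then invoke Lemma \ref{l34} to lift the matrix summands to a pairwise-orthogonal direct sum $\mathfrak{M}_1\oplus\cdots\oplus\mathfrak{M}_s \subseteq bar(U)$, and Lemma \ref{lnovo} to lift each Cayley component to some $\mathcal{C}_j' \subseteq bar(U)$ with $\overline{\mathcal{C}_j'}=\mathcal{C}_j$. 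Setting $S := F1 \oplus \mathfrak{M}_1\oplus\cdots\oplus\mathfrak{M}_s\oplus\mathcal{C}_1'\oplus\cdots\oplus\mathcal{C}_r'$, the canonical projection restricts to a vector-space isomorphism $bar(S)\to bar(U)/rad(U)$, which forces $bar(U) = bar(S) \oplus rad(U)$ and hence $U = S \oplus rad(U)$. Taking $V = 0$ yields the Wedderburn $b$-decomposition; semisimplicity of $bar(S)$ is inherited from the quotient, and $V\oplus rad(U) = rad(U)$ is a nil ideal of $bar(U)$ because $rad(U)$ is nil.

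The main obstacle is that Lemma \ref{lnovo} requires $J^{2} = 0$, whereas $rad(U)^{2}$ need not vanish in general. I would circumvent this by inducting on the nilpotency index $k$ of $rad(U)$. The base case $k \le 2$ permits a direct application of Lemma \ref{lnovo}. For the inductive step I would first lift the Wedderburn decomposition in the quotient $U/rad(U)^{k-1}$ (whose b-radical has nilpotency index at most $k-1$) by the inductive hypothesis, and then refine each lift back to $U$ using Lemma \ref{lnovo} applied with the b-ideal $rad(U)^{k-1}$, which does square to zero. If any square-zero components appear in intermediate quotients that do not arise from simple summands, they can be absorbed into $V$ via Lemma \ref{l35}, explaining why the decomposition in the theorem statement allows a nontrivial $V$ in the general framework even though $V=0$ suffices here.
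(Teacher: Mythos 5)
Your overall strategy---reduce to the case $rad(U)^{2}=0$, pass to $U/rad(U)$, decompose its bar ideal, and lift the pieces with Lemmas \ref{l34} and \ref{lnovo}---is the same as the paper's. But there is a genuine gap at the central step: you claim that $bar(U/rad(U))$ is semisimple, arguing that a nonzero nil ideal there would contradict $rad(U/rad(U))=0$, and on that basis you set $V=0$. This is false. The bar-radical of an alternative baric algebra is not its nilradical; by the Couto--Guzzo theorem used throughout the paper, $rad(U)=bar(U)^{2}\cap R(U)$. Hence $rad(U/rad(U))=0$ only forces a nil ideal $N$ of $bar(U/rad(U))$ to satisfy $N^{2}\subseteq N\cap bar(U/rad(U))^{2}\subseteq R\cap bar^{2}=0$, i.e.\ $N$ is square-zero, not zero. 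This is exactly why the paper's decomposition of $bar(U/rad(U))$ into minimal b-ideals (via Proposition \ref{p21}) includes, besides the total matrix algebras and Cayley algebras, summands $J_{s+1},\ldots,J_{r}$ with $J_{j}^{2}=\overline{0}$, and why Lemma \ref{l35} exists: these summands are lifted only to a subspace $V$ with $V^{2}\subset rad(U)$, not to a subalgebra inside $S$. With your $S$ the projection $bar(S)\to bar(U)/rad(U)$ is injective but not surjective whenever such square-zero summands are present, so $U\neq S\oplus rad(U)$ and your decomposition does not exhaust $U$. Your closing remark that square-zero components ``can be absorbed into $V$'' even though ``$V=0$ suffices here'' has the logic backwards: a nontrivial $V$ is required already in this theorem, not only in some wider framework.

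The remaining ingredients of your plan are essentially the paper's: the reduction to $rad(U)^{2}=0$ (the paper does this by the standard induction on $\dim U$; your induction on the nilpotency index of $rad(U)$ is a routine variant and is acceptable, though the ``refinement'' step is not spelled out), the use of Lemma \ref{l34} for the matrix summands, and Lemma \ref{lnovo} for the split Cayley summands. To repair the proof, reinstate the square-zero minimal b-ideals in the decomposition of $bar(U/rad(U))$, apply Lemma \ref{l35} to obtain $V$ with $V\cong J_{s+1}\oplus\cdots\oplus J_{r}$ and $V^{2}\subset rad(U)$, and then verify separately (as the paper does) that $V\oplus rad(U)$ is a nil ideal of $bar(U)$; with $V=0$ that last assertion was vacuous in your version, but in general it requires the short computation with the components $\overline{c_{j}}\in J_{j}$.
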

\begin{proof}
The same inductive argument based on the dimension of $U$ which is used for associative algebras suffices to reduce the proof of the theorem to the case $rad(U)^2 = 0$.
 Let us take the quotient b-algebra $U/rad(U)$. By \cite[Corollary 3.1]{Guzzo}, we have $rad\big(U/rad(U)\big)=0$ which implies that $U/rad(U)$ is $b$-semisimple, by \cite[Theorem 4.2]{Guzzo}. So, $bar\big(U/rad(U)\big)$ is a sum of
minimal $b$-ideals $I_{1}\oplus \cdots \oplus I_{k} \oplus I_{k+1} \oplus \cdots \oplus I_{s}\oplus J_{s+1}\oplus \cdots \oplus J_{r},$ of $U/rad(U),$ where $I_{i}$ are simple associative algebras $(i=1,\ldots ,k)$, $I_{i}$ are Cayley algebras $(i=k+1,\ldots ,s)$ and $J_{j}^{2}=0$ $(s+1\leq j\leq r)$, by Proposition \ref{p21} and \cite[Corollary 1, page 151]{ZHEV}. Since every ideal $I_{i}$ $(i=1,\ldots ,k)$ is a total matrix algebra $\mathfrak{M}_{t_{i}}$ of degree $t_{i}$ $(i=1, \ldots ,k)$, by \cite[Corollary b, \S  3.5]{Pierce}, then $bar(U)$ contains a direct sum of pairwise orthogonal total matrix algebras $\mathfrak{M}_{i}$ of degree $t_{i}$ $(i=1, \ldots ,k)$ such that $\mathfrak{M}_{1} \oplus \cdots \oplus \mathfrak{M}_{k}\cong \mathfrak{M}_{t_{1}} \oplus \cdots \oplus \mathfrak{M}_{t_{k}}$, by Lemma \ref{l34}. On the other hand, $bar(U)$ contains a vector subspace $V$ such that $V\cong J_{s+1} \oplus \cdots \oplus J_{r}$ and $V^2 \subset rad(U)$, by Lemma \ref{l35}.
Moreover $bar(U)$ contains also a subalgebra $\mathcal{C} \cong I_{1} \oplus \cdots \oplus I_{k}$, by Lemma \ref{lnovo}.

Let us define $S=F1\oplus \mathfrak{M}_{1} \oplus \cdots \oplus \mathfrak{M}_{k} \oplus \mathcal{C}$. Certainly, $S$ is a b-subalgebra of $U$ such that $bar(S)$ is a semisimple algebra and which yields $S$ semisimple, by \cite[Proposition 4.1.]{Couto}. Hence, $S$ is a b-semisimple, by \cite[Lemma 4.1.]{Couto}. Moreover, since $(\mathfrak{M}_{1} \oplus \cdots \oplus \mathfrak{M}_{k})\cap V=(0)$ and $\mathcal{C} \cap (\mathfrak{M}_{1} + \cdots + \mathfrak{M}_{k} + V + rad(U)) = (0)$, then $U=S\oplus V\oplus rad(U)$.

Finally,  let us show that the subspace $V\oplus rad(U)$ is a nil ideal of $bar(U)$. In fact, for arbitrary elements $x\in bar(U)$ and $y\in V\oplus rad(U)$, we have
$\overline{x}=\sum _{i=1}^{k} \overline{a_{i}}+ \sum _{i=k+1}^{s} \overline{a_{i}} + \sum _{j=s+1}^{r} \overline{b_{j}}$, where $\overline{a_{i}}\in I_{i}$ $(i = 1, \ldots,s)$ and $\overline{b_{j}}\in J_{j}$ $(j= s+1, \ldots ,r)$, and $\overline{y}=\sum _{j=s+1}^{r} \overline{c_{j}}$, where $\overline{c_{j}}\in J_{j}$ $(j= s+1, \ldots ,r)$. Hence $\overline{xy}=\overline{x}\,\overline{y}\in J_{s+1}\oplus \cdots \oplus J_{r}$ which implies $xy\in V\oplus rad(U)$. Similarly, we prove $yx\in V\oplus rad(U)$. Thus,  $V\oplus rad(U)$ is an ideal of $bar(U)$. Since $y^{2}\in rad(U)$, then $y$ is a nilpotent element and therefore we can conclude that $V\oplus rad(U)$ is a nil ideal of $bar(U)$.\end{proof}

\begin{theorem}{\bf (Main)} Let $F$ be an algebraically closed field and $(U,\omega)$ a finite dimensional $b-$alternative algebra over $F$. Then $U$ has a Wedderburn $b$-decomposition.
\end{theorem}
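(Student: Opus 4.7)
The strategy is to reduce the non-unital case to Theorem \ref{t31} by adjoining a formal unity. If $U$ already has a multiplicative identity, Theorem \ref{t31} applies directly. Otherwise, form the unital extension $U^{\#} = F \cdot 1 \oplus U$ whose product extends that of $U$ with $1$ as identity, and extend the weight by $\omega^{\#}(\alpha \cdot 1 + u) = \alpha + \omega(u)$. A routine check confirms that $\omega^{\#}$ is a nonzero algebra homomorphism and that $U^{\#}$ is alternative (since $1$ associates with and commutes with everything), so $(U^{\#}, \omega^{\#})$ is a finite dimensional $b$-alternative algebra with unity. Theorem \ref{t31} then yields $U^{\#} = S^{\#} \oplus V^{\#} \oplus rad(U^{\#})$ with $S^{\#} = F \cdot 1 \oplus \mathfrak{M}_{1} \oplus \cdots \oplus \mathfrak{M}_{k} \oplus \mathcal{C}$ by the explicit construction in its proof.

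To descend to $U$, first exhibit an idempotent $e \in U$ of weight $1$: any $a \in U$ with $\omega(a) = 1$ generates a commutative associative non-nil subalgebra whose classical Wedderburn decomposition provides such an $e$ (note that $\omega$ vanishes on nilpotents and hence factors through an idempotent of the semisimple part). Since $\omega^{\#}(1-e) = 0$, we have the vector space decomposition $bar(U^{\#}) = F(1-e) \oplus bar(U)$, and since any nilpotent $\alpha \cdot 1 + u \in U^{\#}$ forces $\alpha = 0$, we get $R(U^{\#}) \subset U$; combining this with \cite[Proposition 4.1]{Couto} yields $rad(U^{\#}) = rad(U)$. Now arrange the lifts in Lemmas \ref{l34} and \ref{lnovo} so that the non-nilpotent idempotent $1-e \in bar(U^{\#})$ appears as the identity of one distinguished matrix piece, say $\mathfrak{M}_{1} = F(1-e)$, while the remaining matrix algebras $\mathfrak{M}_{2}, \ldots, \mathfrak{M}_{k}$ and the Cayley subalgebra $\mathcal{C}$ lie entirely inside $bar(U)$. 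Setting $S := F \cdot e \oplus \mathfrak{M}_{2} \oplus \cdots \oplus \mathfrak{M}_{k} \oplus \mathcal{C} \subset U$ and $V := V^{\#} \subset bar(U)$, one obtains the required Wedderburn $b$-decomposition $U = S \oplus V \oplus rad(U)$.

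The principal obstacle is this controlled choice of lifts: one must verify that the pairwise orthogonal idempotents of $bar(U^{\#}/rad(U^{\#}))$ lifted by Lemma \ref{l32} can be arranged so that exactly one of them is $\overline{1-e}$ and all others lie in the image of $bar(U)$. This relies on the Peirce structure of $U^{\#}$ relative to the pair $\{e, 1-e\}$, which yields $U^{\#} = U_{11} \oplus U_{10} \oplus U_{01} \oplus (F(1-e) \oplus U_{00})$ with $U_{10}, U_{01}, U_{00}, F(1-e) \subset bar(U^{\#})$: any idempotent of $bar(U^{\#}/rad(U^{\#}))$ orthogonal to $\overline{1-e}$ must live in the image of $bar(U)$, so its lift can be taken inside $bar(U)$, completing the argument.
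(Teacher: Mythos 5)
Your reduction to Theorem \ref{t31} by adjoining a formal unity is a genuinely different route from the paper, which instead takes a \emph{principal} idempotent $e$ of weight one, applies Theorem \ref{t31} to the Peirce component $U_{11}$ (which already has unity $e$), and then distributes $U_{10}\oplus U_{01}\oplus U_{00}$ between $V$ and $rad(U)$ by hand. Unfortunately your descent step rests on the claim $rad(U^{\#})=rad(U)$, and that claim is both unjustified and in general false. What is true is $R(U^{\#})=R(U)$; but the bar-radical is computed by \cite[Theorem 4.2]{Couto} as $rad(X)=bar(X)^{2}\cap R(X)$, and $bar(U^{\#})^{2}$ is strictly larger than $bar(U)^{2}$: writing $bar(U^{\#})=F(1-e)\oplus bar(U)$ and decomposing $x\in bar(U)$ in the Peirce decomposition relative to $e$, one has $(1-e)x=x_{01}+x_{00}$ and $x(1-e)=x_{10}+x_{00}$, so $bar(U^{\#})^{2}$ contains all of $U_{10}+U_{01}+U_{00}$. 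For $e$ principal these components lie in $R(U)=R(U^{\#})$, whence $rad(U^{\#})\supseteq U_{10}+U_{01}+U_{00}+rad(U)$, which strictly contains $rad(U)$ whenever some $U_{ij}$ with $(i,j)\neq(1,1)$ is not contained in $bar(U)^{2}$. The result you invoke, \cite[Proposition 4.1]{Couto}, concerns the nilradical $R$, not the bar-radical, and cannot close this gap.

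Consequently the decomposition $U^{\#}=S^{\#}\oplus V^{\#}\oplus rad(U^{\#})$ does not restrict to a Wedderburn $b$-decomposition of $U$: the summand $rad(U^{\#})$ is too large, its excess over $rad(U)$ must be absorbed into $V$, and one then still owes the verification that the enlarged $V$ satisfies $V^{2}\subset rad(U)$ for products involving the off-diagonal Peirce components. That verification (namely $V_{ij}V_{kl}\subset R(U)\cap bar(U)^{2}=rad(U)$ via \cite[Theorem 4.2]{Couto}) is precisely the content of the second half of the paper's proof, so repairing your argument essentially forces you back onto the paper's route. Two smaller points also need attention: your distinguished summand $\mathfrak{M}_{1}=F(1-e)$ is a one-dimensional ideal of the semisimple quotient only if $e$ is chosen principal (otherwise $U_{00}$ survives modulo the radical and $\overline{1-e}$ is not central there), and Lemmas \ref{l32}--\ref{lnovo} control only the \emph{sum} of the lifted idempotents, not that the individual lift of $\overline{1-e}$ may be taken to be $1-e$ itself.
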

\begin{proof} Let us consider a principal idempotent $e$ and take $U=U_{11}\oplus U_{10}\oplus U_{01}\oplus U_{00}$, the Peirce decomposition of $U$, relative to $e$. We know that: {\it (i)} $U_{11}$ is a subalgebra with unity element $e$; {\it (ii)} $U_{10}\oplus U_{01}\oplus U_{00}\subset R(U)$ and {\it (iii)} $R(U)=R\big(U_{11}\big)\oplus U_{10}\oplus U_{01}\oplus U_{00}$. Moreover, as the idempotent $e$ is principal in $U$, then it has weight one. This implies that $U_{11}$ is a $b$-subalgebra of $U$. Thus, $U$ and $U_{11}$ admit the decompositions $U=Fe\oplus bar(U)$ and $U_{11}=Fe\oplus bar\big(U_{11}\big)$, respectively.

From Theorem \ref{t31}, we can decompose $U_{11}$ as a direct sum \\
\centerline{$U_{11}=S \oplus W_{11}\oplus rad\big(U_{11}\big)$,}\\
where $S$ is a b-semisimple b-subalgebra of $U_{11}$ such that $bar(S)$ is a semisimple algebra, $W_{11}$ is a vector subspace of $bar\big(U_{11}\big)$ satisfying $W_{11}^{2}\subset rad\big(U_{11}\big)$ and  $W_{11}\oplus rad\big(U_{11}\big)$ is a nil ideal of $bar\big(U_{11}\big)$. It follows that, $S$ is a b-semisimple b-subalgebra of $U$, by \cite[Proposition 4.1. and Lemma 4.1.]{Couto}.

Now, let us observe that \\
\centerline{$rad(U)\bigcap U_{11}\subset R(U)\bigcap U_{11}= R\big(U_{11}\big)\subset bar\big(U_{11}\big)$,}\\
by \cite[Teorema 4.1.]{Couto} and Lemma \ref{l0}, and $bar\big(U_{11}\big)=bar\big(S\big) \oplus W_{11}\oplus rad\big(U_{11}\big)$. Hence, $rad(U)\bigcap U_{11}\subset W_{11}\oplus rad\big(U_{11}\big)$, because $S\bigcap R\big(U_{11}\big)=\{0\}$. Let us take $V_{11}$ an complementary subspace of $rad(U)\bigcap U_{11}$, in $W_{11}\oplus rad\big(U_{11}\big)$. Then \\
\centerline{$W_{11}\oplus rad\big(U_{11}\big)=V_{11}\oplus \big(rad(U)\bigcap U_{11}\big)$.}\\
Since $rad\big(U_{11}\big)=bar\big(U_{11}\big)^{2}\bigcap R\big(U_{11}\big)\subset bar(U)^{2}\bigcap R(U)=rad(U)$, by \cite[Theorem 4.2.]{Couto}, then $V_{11}^{2}\subset rad(U)$. Thus \\
\centerline{$ U_{11}=S \oplus V_{11}\oplus \big(rad(U)\bigcap U_{11}\big)$,}\\
where $V_{11}^{2}\subset rad(U)$.

Next, let us consider the complementary subspaces $V_{10},~V_{01}$ and $V_{00}$, in $U_{10},~U_{01}$ and $U_{00}$, respectively, such that $U_{10}=V_{10}\oplus rad(U)\cap U_{10}$, $U_{01}=V_{01}\oplus rad(U)\cap U_{01}$ and $U_{00}=V_{00}\oplus rad(U)\cap U_{00}$ and take the vector subspace $V=V_{11}\oplus V_{10}\oplus V_{01}\oplus V_{00}$. Certainly, $V$ is a vector subspace of $bar(U)$ and
\begin{eqnarray*} U&=&U_{11}\oplus U_{10}\oplus U_{01}\oplus U_{00}\\
&=&S \oplus V_{11}\oplus \big(rad(U)\bigcap U_{11}\big)\oplus U_{10}\oplus U_{01}\oplus U_{00}\\
&=&S \oplus V_{11}\oplus \big(rad(U)\bigcap U_{11}\big)\oplus V_{10}\oplus \big(rad(U)\bigcap U_{10}\big)\\
&&\oplus V_{01}\oplus \big(rad(U)\bigcap U_{01}\big)\oplus V_{00}\oplus \big(rad(U)\bigcap U_{00}\big)\\
&=&S \oplus V\oplus rad(U).
\end{eqnarray*}
Now, $V_{11}$ and $V_{10}$ are vector subspaces of $bar(U)$ and $R(U)$ respectively which implies $V_{11}V_{10}\subset \big(bar(U)\big)^{2}$ and $V_{11}V_{10}\subset R(U)$. This yields \\
\centerline{$V_{11}V_{10}\subset R(U)\bigcap \big(bar(U)\big)^{2}=rad(U)$,}\\ by \cite[Teorema 4.2]{Couto}. Similarly, we can show that the products $V_{10}V_{01}$, $V_{10}V_{00}$, $V_{01}V_{11}$, $V_{01}V_{10}$, $V_{00}V_{01}$ and  $V_{00}^{2}$ are subsets of $rad(U)$. As all remaining pro\-ducts are zeros, then we can conclude that $V^{2}\subset rad(U)$.\end{proof}

\section{Final remarks}
Importantly, the concept of heredity to b-algebras can not be extended to alternative algebras, we can see this through an example found in \cite{bruno}.

\end{document}